\documentclass[11pt]{amsart}

\usepackage[T1]{fontenc}
\usepackage{lmodern}
\usepackage{amsmath,amssymb,amsthm,mathtools}
\usepackage[margin=1in]{geometry}
\usepackage{microtype}
\usepackage{hyperref}
\usepackage{xcolor}
\usepackage{esint}
\usepackage[
    backend=bibtex,
    style=numeric,
    bibencoding=utf8,
    language=auto,
    autolang=other,
    maxnames=10
]{biblatex}
\newtheorem{theorem}{Theorem}
\newtheorem{lemma}[theorem]{Lemma}
\theoremstyle{remark}
\newtheorem{remark}[theorem]{Remark}
\theoremstyle{remark}
\newtheorem*{ack}{Acknowledgments}
\newtheorem*{aidisc}{AI use disclosure}

\newcommand{\HH}{\mathbb H}

\title{Stable Constant Mean Curvature Hypersurfaces in $\HH^n$}

\date{September 27, 2026}
\author{Filippo Gaia}
\address{Department of Mathematics\\Stanford University\\Stanford, CA 94305\\USA}
\email{fgaia@stanford.edu}
\author{Rafe Mazzeo}
\address{Department of Mathematics\\Stanford University\\Stanford, CA 94305\\USA}
\email{rmazzeo@stanford.edu}
\author{Ivan Miranda}
\address{IMPA– Instituto de Matemática Pura e Aplicada\\ Rio de Janeiro, RJ\\ Brasil, 22460-320.}
\email{ivan.miranda@impa.br}
\begin{document}
\begin{abstract}
    For every \(n\geq 4\) and every \(H\) in a neighborhood of \(n-1\) (depending on $n$), we prove the existence of a properly embedded strongly stable CMC hypersurface in \(\mathbb H^n\) with mean curvature \(H\) and infinitely many ends, invariant under the action of a Schottky group. In particular, stable Bernstein-type rigidity fails in this range.
\end{abstract}
\maketitle
\section{Introduction}
In this paper we prove the following result.

\begin{theorem}\label{thm:main}
For each integer $n\geq 4$, there exists $\delta>0$ such that for any $H \in(n-1-\delta,n-1+\delta)$, there exists a properly embedded, noncompact hypersurface $\Sigma_H^{n-1}\subset\HH^n$
with constant mean curvature $H$ which is strictly strongly stable, has infinitely many ends, and is invariant under the action of a Schottky group.
\end{theorem}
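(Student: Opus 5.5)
We will construct $\Sigma_H$ as a perturbation of a \emph{horosphere-based} configuration. Horospheres have $H=n-1$ and are strictly strongly stable: the Jacobi operator is $L=\Delta+|A|^2-(n-1)=\Delta$ on a flat $\mathbb R^{n-1}$. The catch is that $\Delta$ on $\mathbb R^{n-1}$ has no spectral gap. This is why we switch to equidistant hypersurfaces and use a group quotient.

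Fix a Schottky group $\Gamma\subset \mathrm{Isom}(\HH^n)$ generated by hyperbolic elements with small limit set $\Lambda\subset S^{n-1}_\infty$, so that the critical exponent $\delta_\Gamma$ is small. Consider the convex core and, for $r>0$, the boundary $\Sigma_r$ of the $r$-neighbourhood of $\mathrm{Hull}(\Lambda)$. This set is $\Gamma$-invariant, properly embedded, and has infinitely many ends, one for each complementary component of $\Lambda$, counted modulo $\Gamma$ in the right sense. As $r\to\infty$, $\Sigma_r$ becomes $C^2$-close to an equidistant/horosphere-like hypersurface with mean curvature tending to $n-1$. More robustly, we look for $\Sigma_H$ as a normal graph $u$ over a fixed $\Gamma$-invariant reference hypersurface $\Sigma_0$. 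We take $\Sigma_0$ to be a large equidistant set of a totally geodesic $\HH^{n-1}$ glued along the ends, or the level set of the $\Gamma$-invariant "horofunction" $\varphi(x)=\log\sum_{\gamma}e^{-(n-1)d(x,\gamma o)}$ type Busemann average. We then solve $H(u)=H$ on the quotient $\Sigma_0/\Gamma$, a noncompact manifold with finitely many funnel-type ends.

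The key analytic steps are as follows. First, verify that the mean curvature of $\Sigma_0$ is $n-1+O(\epsilon)$ in a weighted $C^{0,\alpha}$ sense, with the error decaying toward the ends. Second, show that the Jacobi operator $L=\Delta_{\Sigma_0}+|A|^2-n+1$ is invertible on $\Gamma$-invariant functions in weighted Hölder spaces. On the quotient the induced metric is asymptotically hyperbolic or conformally compact along the funnels, and $|A|^2-(n-1)\approx -(n-1)+\kappa^2 n$ is close to $0$ or negative. Invertibility thus reduces to a spectral bound $\lambda_0(-\Delta_{\Sigma_0/\Gamma})>0$. For Schottky quotients with small $\delta_\Gamma$, Patterson–Sullivan theory gives $\lambda_0=\delta_\Gamma(n-2-\delta_\Gamma)$ for the $\HH^{n-1}$-like sheet when $\delta_\Gamma>(n-2)/2$, and $\lambda_0=(n-2)^2/4$ otherwise. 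The restriction $n\ge4$ enters here: we need $(n-2)^2/4$ to dominate the $O(1)$ potential correction $\sup(|A|^2-(n-1))$, which is negative on the equidistant model.

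Third, apply the implicit function theorem in $H$ near $n-1$ to get $\Sigma_H$ with $H\in(n-1-\delta,n-1+\delta)$. Strict strong stability then follows because the first eigenvalue of $-L$ on compactly supported functions persists under small $C^2$ perturbation, given the uniform positive bottom of spectrum. Embeddedness and properness persist from $\Sigma_0$ for small $u$.

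I expect the main obstacle to be the second step: finding a reference hypersurface whose Jacobi operator has a uniform \emph{positive} spectral gap despite $|A|^2$ being comparable to $n-1$. The hardest place is the transition regions between ends, where the geometry is neither horospherical nor equidistant. I would first try to control $|A|^2-(n-1)\le -c<0$ pointwise by choosing $\Sigma_0$ as a convex level set. Failing that, I would absorb the positive part via a Hardy/Barta-type inequality with test function built from Patterson–Sullivan densities.
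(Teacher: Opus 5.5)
\textbf{There is a genuine gap.} Your proposal has no mechanism that produces $H\geq n-1$, and nothing in it uses $n\geq 4$ in an essential way.

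Your stated reason for $n\geq4$ does not hold up. First, $(n-2)^2/4>0$ for $n=3$ as well. Second, a negative potential needs no domination. Third, the relevant spectral gap is not $O(1)$: these hypersurfaces sit far out (induced metric $\cosh^2 r\, g_{\HH^{n-1}}$, or $\varepsilon^{-2}h$ near infinity), so $\lambda_0$ and $|A|^2-(n-1)$ are of the same small order. Since Silveira's theorem excludes non-horospherical examples with $H\geq 2$ in $\HH^3$, an argument insensitive to $n$ cannot be correct.

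Neither of your reference surfaces works.
\begin{itemize}
\item An equidistant hypersurface to a $\Gamma$-invariant $\HH^{n-1}$ is already CMC, totally umbilic, has one end, and has $H=(n-1)\tanh r<n-1$. Perturbations decaying along the funnels of the quotient cannot change that value of $H$.
\item You have no $\Gamma$-invariant solution with $H=n-1$ to serve as base point for an implicit function theorem in $H$. Horospheres are unavailable, since a purely loxodromic group preserves none.
\item The level sets of your Busemann average lie near infinity, so their quotient is the compact manifold $Y=\Omega/\Gamma$, not a manifold with funnels. There $\lambda_0(-\Delta)=0$, so your reduction to a positive bottom of spectrum on the quotient fails. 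For graphs $x_h=\varepsilon e^{-w}$, the linearized mean curvature operator is, up to sign and higher order, $\varepsilon^2(\Delta_h+\frac{R_h}{n-2})$. The non-constant part of the mean curvature of the level sets is also of order $\varepsilon^2$ (namely $\frac{R_h}{2(n-2)}\varepsilon^2$). So the required correction is $O(1)$, not perturbative, unless $R_h$ is already constant.
\end{itemize}

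\textbf{The missing idea.} Divide the CMC equation for $x_h=\varepsilon e^{-w}$ by $\varepsilon^2$. Its limit at $\varepsilon=0$ is exactly $-R_{e^{2w}h}/(2(n-2))=\mathrm{const}$, i.e.\ the Yamabe equation on the conformal infinity $Y$. So one must first take $h$ of constant scalar curvature; the perturbed leaves then have $H=n-1+\frac{R_h}{2(n-2)}\varepsilon^2+O(\varepsilon^3)$.

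The side of $n-1$ is therefore fixed by the Yamabe sign of $(Y,\mathfrak c)$. By Nayatani, this is the sign of $\frac{n-3}{2}-\delta(\Gamma)$, and this is precisely where $n\geq 4$ enters. A single group with small $\delta$ gives at best $H>n-1$. To cover a two-sided neighbourhood of $n-1$, including $n-1$ itself, the paper uses an analytic family $\Gamma_T$ with $\delta(\Gamma_T)$ crossing $\frac{n-3}{2}$. It combines this with an implicit function argument in $(T,\varepsilon)$, projecting off constants so that the constant value of $H$ floats.

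The same choice is needed for stability. On the lift, $\lambda_0(\tilde\Sigma)\approx\varepsilon^2\lambda_0(\tilde Y,q^\ast h)$, which is positive by Brooks because $F_N$ is non-amenable. Meanwhile $|A|^2-(n-1)=\frac{R_h}{n-2}\varepsilon^2+O(\varepsilon^4)$. Both are of order $\varepsilon^2$, so one needs $R_h$ small relative to $\lambda_0(\tilde Y,q^\ast h)$. This holds near $\delta=\frac{n-3}{2}$, but you have no control of it for a group with small $\delta$, where $R_h>0$ is not small.

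A minor point: $\Omega=S^{n-1}\setminus\Lambda$ is connected, so ends are not counted by complementary components of $\Lambda$. Infinitely many ends instead follows from $\tilde\Sigma$ being quasi-isometric to $F_N$.
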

\textit{Strongly stable} means that 
\begin{align}
\label{eq:stability-definition}
Q_{\Sigma_H}^{\HH^n}(f):=\int_{\Sigma_H}  \bigl(|\nabla f|^2-(|A|^2-(n-1))f^2\bigr)\,d\mu\geq0\ \ \mbox{for every} \ \ f\in C_c^\infty(\Sigma_H), \ f \not\equiv 0;
\end{align}
\textit{{Strictly strongly stable}} means that $Q_{\Sigma_H}^{\HH^n}(f)\geq \eta_{H,n}\int_{\Sigma_H}f^2\,d\mu$ for any non-zero $f$, for some $\eta_{H,n}>0$. By contrast, \textit{weakly stable} means that \eqref{eq:stability-definition} holds for 
any $f\in C_c^\infty(\Sigma_H)$ with $\int_{\Sigma_H} f\,d\mu=0.$\\ 

\medskip

We briefly explain the motivation for studying this problem. In recent years, substantial progress has been made on the following question, usually referred to as the \emph{stable Bernstein problem}:
\[
\textit{Must every complete, connected, two-sided, stable minimal hypersurface \(\Sigma\) in \(\mathbb R^n\) be a hyperplane?}
\]
In this setting, stability means that
\[
Q_{\Sigma}^{\mathbb R^n}(f)
:=
\int_\Sigma
\left(
|\nabla f|^2-|A|^2f^2
\right)d\mu
\geq 0
\qquad
\text{for every }f\in C_c^\infty(\Sigma).
\]

A positive answer for \(n=3\) was given independently by do Carmo--Peng, Fischer-Colbrie--Schoen, and Pogorelov
\cite{doCarmoPeng1979,FischerColbrieSchoen1980,Pogorelov1981}.
The case \(n=4\) was proved by Chodosh and Li
\cite{ChodoshLi2024}; see also their alternative proof
\cite{ChodoshLi2023} and the independent proof of
Catino--Mastrolia--Roncoroni \cite{CatinoMastroliaRoncoroni2024}.
The cases \(n=5\) and \(n=6\) were subsequently settled by
Chodosh--Li--Minter--Stryker \cite{ChodoshLiMinterStryker2026}
and Mazet \cite{Mazet2024}, respectively. Finally, a recent
preprint of Hong--Li--Wang proves the result for
\(n=7\) \cite{HongLiWang2026}. By contrast, the answer is negative
in every ambient dimension \(n\geq8\), as there exist smooth area-minimizing hypersurfaces asymptotic to the Simons cone
\cite{Simons1968,BombieriDeGiorgiGiusti1969,HardtSimon1985}.

Similar rigidity problems have been studied in other space forms. In particular, in the hyperbolic space $\HH^n$ the stability condition for 
constant mean curvature (CMC) $H = n-1$ takes the form 
\begin{align*} Q_\Sigma^{\mathbb H^n}(f)=\int_\Sigma (\lvert \nabla f\rvert^2-\lvert\mathring{A}\rvert^2f^2)\,d\mu, \end{align*}
where $\mathring{A}$ denotes the trace-free part of $A$.
In analogy to the stable Bernstein problem, it is natural to ask the following question:
\begin{equation}\label{q: question-1}
\parbox{0.85\textwidth}{\centering\itshape
Must every complete, connected, two-sided, strongly stable CMC
hypersurface $\Sigma\subset\mathbb H^n$ with $H=n-1$ satisfy
$\mathring A=0$?
}
\end{equation}
Such a hypersurface would be totally umbilic and hence a horosphere.
In ambient dimension \(n=3\), Silveira answered this question
affirmatively \cite{daSilveira1987}. In higher ambient dimensions,
the question remained open until recently (see Remark \ref{rem: zwang}). For larger values of $H$, rigidity results are known also in higher dimension: 
Q. Deng showed in \cite{Deng2011} that if $\Sigma$ is weakly stable, if $n=4$ and $H>\sqrt{\frac{64}{63}}(n-1)$ (or if $n=5$ and 
$H>\sqrt{\frac{175}{148}}(n-1)$), then $\Sigma$ must be compact. This improved previous results by X. Cheng \cite{XuCheng}. Moreover, 
for $n=4$ and $H>n-1$, H. Hong showed in \cite{Hong2025} that if $\Sigma$ is weakly stable (or has finite weak index), has finitely many ends 
and $\dim H_c^1(\Sigma)<\infty$, then $\Sigma$ must be compact. In his work H. Hong asks whether this rigidity result remains valid without 
assumption on the topology or the number of ends of $\Sigma$. Similarly, Chodosh asked in \cite{Chodosh2026} whether a two-sided stable 
CMC immersion with mean curvature $H\geq n-1$  must be a horosphere (for $H=n-1$, this question is equivalent to \eqref{q: question-1}).
When $n=6$, the third-named author proved a partial positive answer to Chodosh's question \cite{miranda2026} building on the works of Mazet \cite{Mazet2024} and Chen-Hong-Li \cite{CHL}. 

Theorem \ref{thm:main} shows that for any $n\geq 4$, the answer to \eqref{q: question-1} and to the questions of H. Hong and Chodosh is negative. 

\begin{remark}
Note that every non-minimal CMC hypersurface is two-sided, and its orientation can be chosen so that $H>0$. In addition, by the Hopf-Rinow theorem,
every properly embedded hypersurface in $\HH^n$ is complete. 
\end{remark}
\begin{remark}\label{rem: zwang}
During the preparation of this work, we became aware of a new paper by  Z. Wang \cite{wang2026stronglystablecmconehypersurfaces}
who constructs simpler examples of complete CMC hypersurfaces in $\HH^n$ with $H=n-1$ for any $n\geq 4$. His examples have finite topology 
and two ends. However \cite{Hong2025} shows that there are no such examples in $\HH^4$ which have $H>3$, while our results cover this case as well.

We were also informed that in another recent paper, Wang \cite{wang2026finiteindexconstantmean} gives a partial positive answer to 
Chodosh's question when $n=7$ and improves the threshold for rigidity for $n \in \{4,5,6\}$.
\end{remark}

\subsection{Idea of the proof}
We begin by constructing CMC hypersurfaces in a convex cocompact hyperbolic quotient $X=\HH^n/\Gamma$ which have constant mean curvature $H$ 
ranging in an interval around $n-1$. To this end we follow an idea of the second-named author and Pacard \cite{MazzeoPacard2011}, where CMC foliations of
asymptotically hyperbolic manifold near infinity are constructed by perturbing level sets of the renormalized distance from the ideal boundary. The 
natural compactification of $X$ comes with a natural conformal class $\mathfrak c$ on its  boundary at infinity $Y$. By Fefferman-Graham \cite{FeffermanGraham2012}, 
any choice of a metric $h \in \mathfrak c$ uniquely determines a geodesic defining function $x_h$ for this ideal boundary. The local-at-infinity CMC
foliations are in bijective correspondence with choices of metrics $h$ in this conformal class with constant scalar curvature $R_h$.  Each level set 
$x_h = \varepsilon$ is then perturbed slightly to have constant mean curvature equal to
\begin{equation}\label{eq: H(e,t)}
H=n-1+\frac{R_h}{2(n-2)}\varepsilon^2+O(\varepsilon^3).
\end{equation}
Now, by a result of Nayatani \cite{Nayatani1997}, the Yamabe constant of $(Y,\mathfrak c)$ has the same sign as 
\begin{align*}
\frac{n-3}{2}-\delta(\Gamma),
\end{align*}
where $\delta(\Gamma)$ is the \textit{critical exponent} of $\Gamma$. By \cite{Sullivan1984}, $\delta(\Gamma) = \dim_{\mathcal H}\Lambda_\Gamma$,
where $\Lambda_\Gamma$ is the \textit{limit set} of $\Gamma$. We define these carefully  below. 

We then construct a family of groups $\Gamma_T$ depending analytically on the parameter $T$, which have critical exponent ranging in a 
neighborhood of $\frac{n-3}{2}$. 
Identifying these quotients in a suitable sense, we thus obtain the smooth family of metrics $h_T$ with constant scalar curvature $R_T$ ranging in a 
neighborhood of $0$. The CMC hypersurface $\Sigma_{\varepsilon, T}$ has mean curvature $H_{\varepsilon, T}$ given by \eqref{eq: H(e,t)} and depends continuously on $\varepsilon$ and $T$. Therefore, as $\varepsilon$ and $T$ vary,  $H_{\varepsilon, T}$ covers the desired range.\\

\smallskip

Next we show that the lift $\tilde \Sigma_{\varepsilon, T}$ of $\Sigma_{\varepsilon, T}$ to $\mathbb H^n$ satisfies the desired properties. We focus here on strict 
strong stability. To this end we observe that the groups $\Gamma_T$ can be chosen such that $\Gamma_T\simeq F_N$ is the free group with $N$ generators,
$N \geq 2$.  For these groups, the inclusion map $\iota: Y\to \overline{X}$ induces an isomorphism
\[
\iota_\ast: \pi_1(Y)\to \pi_1(\overline{X})\simeq \pi_1(X).
\]
This implies in particular that $\pi_1(\Sigma_{\varepsilon, T})\simeq F_N$ and that $\tilde \Sigma_{\varepsilon, T}$ is the universal cover of $\Sigma_{\varepsilon, T}$. As $F_N$ is not amenable, a result of Brooks \cite{Brooks1981} implies that the bottom of the spectrum of the Laplacian is positive for $\tilde \Sigma_{\varepsilon, T}$ with the metric $g_{\tilde \Sigma_{\varepsilon, T}}$ induced by $\HH^n$. In order to get a uniform lower bound for this quantity, we use the fact that by the construction of $\Sigma_{\varepsilon, T}$ described above, the metric $g_{\tilde \Sigma_{\varepsilon, T}}$ has the form $g_{\tilde \Sigma_{\varepsilon, T}}=\varepsilon^{-2} \widetilde{h_T}+O(1)$, where $\widetilde{h_T}$ is the pull-back of $h_T$ by the quotient map (this asymptotic holds after identifying $\tilde \Sigma_{\varepsilon, T}$ with the universal covering $\tilde Y$ of $Y$). This yields
\begin{align}
    \label{eq: lower-est-lambda-0}
    \lambda_0(\tilde \Sigma_{\varepsilon, T}, g_{\tilde \Sigma_{\varepsilon, T}})\geq c\varepsilon^2
\end{align}
for some positive constant $c$, uniformly in $T\in I$ (after shrinking $I$ if necessary).
The construction of $\Sigma_{\varepsilon, T}$ as a graph over $Y$ also implies that the shape operator of $\tilde \Sigma_{\varepsilon, T}$ satisfies
\begin{align}
    \label{eq: est-shape-op-intro}
    \lvert A_{\tilde\Sigma_{\varepsilon, T}}\rvert^2-(n-1)=\varepsilon^2O(\lvert T-T_0\rvert)+o(\varepsilon^2).
\end{align}
Combining \eqref{eq: lower-est-lambda-0} and \eqref{eq: est-shape-op-intro} we show that for any $f\in C_c^\infty(\tilde \Sigma_{\varepsilon, T})$
\[
Q^{\HH^n}_{\tilde \Sigma_{\varepsilon, T}}(f)\geq \varepsilon^2\left(c+O(\lvert T-T_0\rvert)+o_\varepsilon(1)\right)\int_{\tilde \Sigma_{\varepsilon, T}}f^2d\mu_{g_{\tilde\Sigma_{\varepsilon, T}}}.
\]
By taking $I$ and $\varepsilon_0$ smaller if necessary, we can ensure that if $f$ is not zero, $Q^{\HH^n}_{\tilde \Sigma_{\varepsilon, T}}(f)>0$.\\
Finally, note that $\widetilde\Sigma_{\varepsilon, T}$ is obtained by gluing copies of a
fundamental region of $\Sigma_{\varepsilon, T}$ according to the Cayley graph of
$\pi_1(\Sigma)\simeq F_N$, therefore it has infinitely many ends. Since a complete, connected, totally umbilic
hypersurface in $\HH^n$ has at most one end, $\widetilde\Sigma_{\varepsilon,T}$
cannot be umbilic.

\begin{remark}
    As required by \cite{daSilveira1987}, this construction cannot be repeated for $n=3$. In fact, in this case the Schottky group
construction of Theorem \ref{thm:schottky-groups} produces a quotient
\(X_T\) whose ideal boundary \(Y_T\) is a closed surface of genus \(N\),
where \(N\geq2\) is the number of generators. Hence $\chi(Y_T)=2-2N<0$.
The Gauss-Bonnet theorem therefore implies that \(Y_T\) supports no
metric of constant nonnegative scalar curvature. Moreover, when
\(n=3\), the natural map
\[
    \pi_1(Y_T)\longrightarrow\pi_1(X_T)
\]
is no longer an isomorphism.
\end{remark}

\subsection{Related literature}

The problem of classifying weakly stable CMC hypersurfaces immersed in special classes of Riemannian manifolds has several applications and there is a rich literature on this topic. For instance, this study is directly connected to the isoperimetric problem in the compact setting (see \cite{ros2005isoperimetric}). When allowing non-compact examples, these classification results are used to derive curvature estimates for stable CMC hypersurfaces (\cite{Rosenberg-Souam-Toubiana}, \cite{Chodosh-Li-Stryker-4mfds}, \cite{miranda2026}), to develop the min-max theory for the existence of CMC hypersurfaces in generic ambient manifolds \cite{mazurowski2024infinitely}, to derive the so-called maximum principle at infinity in the sense of Ros and Rosenberg \cite{Ros-Rosenberg}, or to study foliations and laminations by CMC hypersurfaces. For an overview of related results see \cite{meeks2016constant,SurveyNelli}. Foliations of asymptotically hyperbolic (or Euclidean) spaces by CMC leaves have attracted special interest because of their connection with mathematical physics and related research areas (see, for instance, \cite{foliation-cmc-asympt-flat},\cite{MazzeoPacard2011}). We refer the reader to the introduction of \cite{Barbosa-Manfredo} for a geometric and variational interpretation of the notions of weak stability and strong stability for CMC hypersurfaces.

With recent advances in the stability analysis of CMC hypersurfaces, after the breakthrough of Chodosh and Li \cite{ChodoshLi2024}, taking into account several contributions, we now have a good understanding of the classification of complete weakly stable CMC hypersurfaces immersed in simply connected space forms of non-negative sectional curvature up to dimension six. In Euclidean spaces of low dimension, the complete, connected, immersed two-sided weakly stable CMC hypersurfaces are the round spheres and the affine hyperplanes (for the minimal case, see the beginning of the introduction and \cite{BELLETTINI2019133}, for the non-minimal case see \cite{Barbosa-Manfredo,Elbert-Nelli-Rosenberg,XuCheng,miranda2026,CHL}); see also \cite{HongLiWang2026,wang2026finiteindexconstantmean} for recent preprints in dimension seven. In the round spheres of low dimension, the complete immersed two-sided weakly stable CMC hypersurfaces are the geodesic spheres (\cite{CatinoMastroliaRoncoroni2024,miranda2026,shen-ye,FischerColbrieSchoen1980,Barbosa-Manfredo-Eschenburg,HongYan2025}); see also the recent preprint in dimension seven \cite{wang2026finiteindexconstantmean}. However, it is perhaps surprising that we still lack a good understanding of non-compact stable CMC hypersurfaces immersed in the hyperbolic spaces of low dimension. This is in contrast with the classical result of Silveira \cite{daSilveira1987} which, as mentioned before, gives a complete picture in dimension three: a complete, noncompact, weakly stable CMC hypersurface immersed in the hyperbolic space with mean curvature $H\ge2$ is necessarily a horosphere. We note that horospheres are strongly stable, but were classified under the weak stability hypothesis. It should be remarked that strongly stable CMC examples with mean curvature $H<2$ abound, as was noticed by Silveira \cite{daSilveira1987}.  

Chodosh and Li connected the stability problem for minimal hypersurfaces with the theory of positive scalar curvature in their seminal work \cite{ChodoshLi2023}. This connection was explored further in the minimal case in \cite{ChodoshLiMinterStryker2026},  \cite{Mazet2024} and \cite{Chodosh-Li-Stryker-4mfds}, and these ideas were developed in the CMC case in \cite{Hong2025},  \cite{HongYan2025} and \cite{miranda2026}. One of the heuristics in these problems is that the stability inequality provides a spectral control on scalar curvature, and three-manifolds with positive scalar curvature in spectral sense share macroscopic one-dimensionality properties (see the introduction of \cite{ChodoshLiMinterStryker2026} and references therein). Under additional topological hypotheses on the manifold, such as finite topology, one can leverage the aforementioned geometric properties to control the volume growth of the hypersurface. 

Volume growth conditions are closely related to stability problems for CMC hypersurfaces (see, for instance, the seminal work of Schoen, Simon, and Yau \cite{SSY}). We highlight here a result by Ilias, Nelli, and Soret \cite{IliasNelliSoret2016} in the context of the hyperbolic space: a complete strongly stable CMC hypersurface immersed in the hyperbolic space $\mathbb{H}^n$ with mean curvature $H>n-1$ must have exponentially fast volume growth.

As a consequence of the so-called Schoen-Yau rearrangement of the stability inequality, every strongly stable CMC hypersurface immersed in $\mathbb{H}^n$ with mean curvature $H>n-1$ has positive scalar curvature in spectral sense. These heuristics are directly related to Hong's theorem \cite{Hong2025}, that under finite topology assumptions, a complete strongly stable CMC hypersurface with mean curvature $H>3$ immersed in $\mathbb{H}^4$ is necessarily compact (see the alternative proof in the appendix of \cite{miranda2026}).

Therefore, a negative answer to Hong's and Chodosh's question in the range $H>n-1$ must be given by a complete CMC hypersurface with exponentially fast volume growth, and for $n=4$ it cannot have finite topology. On the other hand, it must have bounded second fundamental form in low dimensions (\cite{Rosenberg-Souam-Toubiana},\cite{Chodosh-Li-Stryker-4mfds},\cite{miranda2026}) and positive scalar curvature in spectral sense.

    Notably, positive scalar curvature conditions do not control the number of ends of three-manifolds. The control on the number of ends in the theory of stable CMC hypersurfaces is usually derived with a non-trivial interaction between the Bochner formula for harmonic one-forms and the stability inequality, since the seminal work of Schoen and Yau \cite{schoen-yau-harmonicas}. We stress that this line of study has a different flavour in the minimal case as opposed to the CMC case, due to linear algebra inequalities that are harder to handle in the non-minimal case. As an example, we mention that it is known that a complete, connected, two-sided, stable minimal hypersurface in the Euclidean space must have one end, in every dimension $n\geq 3$, but the analogous result was proved to hold true for noncompact, weakly stable CMC hypersurfaces only up to dimension 7 (\cite{Cao-Shen-Zhu},\cite{Detang-XuCheng-Cheung},\cite{Fu-Li},\cite{miranda2026}). This linear algebra obstruction explains the threshold for rigidity observed by X. Cheng \cite{XuCheng} and Q. Deng \cite{Deng2011} in their study of stable CMC hypersurfaces in hyperbolic space (see also the appendix in \cite{miranda2026}). Theorem \ref{thm:main} shows that indeed there is a geometric obstruction for the control of the number of ends in lower regimes of mean curvature.

The number $H=n-1$ is a critical mean curvature value for the hyperbolic space $\mathbb{H}^n$ also in a different aspect. There exist compact CMC hypersurfaces with non-negative mean curvature $H$ embedded in $\mathbb{H}^n$ if and only if $H>n-1$. The existence of compact CMCs with mean curvature $H$ imposes restrictions on the geometry of non-compact CMCs with mean curvature $H$ due to the maximum principle (see, for instance, \cite{Manzano-Perez-Rodriguez} and \cite{SurveyNelli}). This was once regarded as evidence in favor of positive answers to both Hong’s and Chodosh’s questions.

In light of Theorem \ref{thm:main}, it is natural to ask: what is the least positive number $H_*^n$ such that there exists no complete strongly stable CMC hypersurface immersed in $\mathbb{H}^n$ with mean curvature $H>H_*^n$? Is the optimal value attained by some CMC hypersurface with distinguished properties? Note that $H_*^3 = 2$, and equality is attained uniquely by horospheres in this case \cite{daSilveira1987}.

\begin{aidisc}
    The idea of constructing stable CMC hypersurfaces with mean curvature greater than $3$ in $\HH^4$ by combining results of Mazzeo-Pacard \cite{MazzeoPacard2011}, Nayatani \cite{Nayatani1997} and Brooks \cite{Brooks1981} was suggested to the authors by ChatGPT 5.6 Sol. The authors subsequently developed and verified the argument, using the model for assistance with some intermediate computations, and the identification of relevant references. All mathematical statements, proofs, and citations were independently checked by the authors, who take full responsibility for the contents of the article. The final manuscript was written and revised by the authors.
\end{aidisc}

\begin{ack}
    We are grateful to Otis Chodosh for putting us in contact, for the discussions on the subject of this article and for his interest in the result. 
    We thank Franco Vargas Pallete for pointing us to \cite{analytic}. I. M. is also thankful to Lucas Ambrozio for his interest, Misha Belolipetsky for pointing references and Luis Zegarra for helpful conversations.
    F. G. was supported by the Swiss National Science Foundation (SNSF)
through the Postdoc.Mobility grant P500PT\_230344. I. M. was financed in part by the Coordenação de Aperfeiçoamento de Pessoal de Nível Superior - Brasil (CAPES) – Finance Code 001.
\end{ack}

\section{Technical setup}
A {\it conformally compact} space is a complete Riemannian manifold $(X,g)$ where $X$ is the interior of a smooth,
compact manifold with boundary $\overline{X}$ and the metric can be written as $g = \rho^{-2}\overline{g}$,
where $\overline{g}$ is a (nondegenerate) Riemannian metric on $\overline{X}$ and $\rho$ is a defining function for $\partial X$, i.e.,
$\rho$ is a smooth nonnegative function which vanishes only on $\partial X$, with $d\rho \neq 0$ there. Since 
$g = (a \rho)^{-2} (a^2 \overline g)$ for any smooth positive function $a$, we see that only the conformal class of the restriction 
of $\overline{g}$ to $T \partial X$ is well-defined. This is called the conformal infinity $\mathfrak c(g)$, and is a conformal structure
on the ideal boundary $\partial_\infty X$. 

Hyperbolic space $\mathbb H^n$ in its Poincar\'e ball model $(B^n, g)$ is the first example of a conformally compact space.
Here $\overline{g}$ is the Euclidean metric on the ball and $\rho(z) = (1-|z|^2)/2$.  There is a wider class of examples
of interest in this paper, namely the class of convex cocompact quotients of hyperbolic space.  
Let $\Gamma$ be a discrete, torsion-free subgroup of $\mathrm{Isom} \, \mathbb H^n$ which acts propertly discontinuously on 
$\mathbb H^n$.  Thus 
\[
X_\Gamma:=\mathbb H^n/\Gamma
\]
with the induced hyperbolic metric is smooth and complete, and we suppose that it is noncompact.  The limit set $\Lambda_\Gamma$
is, by definition, the set of accumulation points on $\partial_\infty \mathbb H^n$ of any orbit $\Gamma \cdot o$, where $o$ is any
basepoint in $\mathbb H^n$.  Its convex hull $\mathrm{conv}(\Lambda_\Gamma)$ in $\mathbb H^n$ is invariant with respect
to the action of $\Gamma$, and the quotient $\mathrm{conv}(\Lambda_\Gamma)/\Gamma$ is called the convex core of
$X_\Gamma$.  We then say that $\Gamma$ is convex cocompact if this convex core is compact.   

It turns out that any convex cocompact quotient is geometrically finite and conformally compact.  Indeed, define the 
domain of discontinuity $\Omega_\Gamma$ as the complement $S^{n-1}\setminus \Lambda_\Gamma$. The action of $\Gamma$
extends to a conformal action on $S^{n-1}$ and restricts to a properly discontinuous action on $\Omega_\Gamma$. We 
then realize the geodesic compactification of $X_\Gamma$ as the quotient
\[
\overline{X}_\Gamma = (\mathbb H^n \sqcup \Omega_\Gamma)/\Gamma, \qquad  \partial_\infty X_\Gamma :=  \Omega_\Gamma / \Gamma.
\]
The conformal infinity of the hyperbolic metric on $X_\Gamma$ is naturally induced from the standard conformal structure on the sphere.

If $(X,g)$ is any conformally compact manifold, and $h_0$ is a metric on $\partial_\infty X$ representing the conformal infinity of $g$, then
a well-known result by Graham and Lee \cite{GrahamLee1991} asserts that there exists a unique defining function $x$ for $\partial_\infty X$ in 
$\overline{X}$ defined in a collar neighborhood $\mathcal U$ of the boundary, such that in terms of the corresponding product 
decomposition $\mathcal U = [0, \epsilon_0) \times \partial X$, the metric $g$ takes the form
\[
g = \frac{dx^2 + h(x)}{x^2}.
\]
Here $h(x)$ is a smooth family of metrics on $\partial X$ with $h(0) = h_0$ the given representative.  Note that
$|dx/x|^2_g \equiv 1$, or equivalently, with $\overline{g} = x^2 g$, $|dx|^2_{\overline{g}} \equiv 1$. 

We now record a few geometric facts about this setup; these hold for $X_\Gamma$ with its hyperbolic metric, or more generally
for conformally compact spaces for which $g$ is Einstein (but we do not discuss that generalization further here). 
\begin{lemma}\label{lem: expansion-h}
If $n\geq 4$, then
\begin{align}\label{eq:lem-exp-Px}
h(x) =h_0-x^2 P_{h_0}+\frac{x^4}{4}P_{h_0}^2,
\end{align}
where 
\[
P_{h_0}
= \frac{1}{n-3} 
\left(
\operatorname{Ric}_{h_0}
-\frac{R_{h_0}}{2(n-2)}h_0
\right).
\]
denotes the Schouten tensor of $h_0$.
\end{lemma}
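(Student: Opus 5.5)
This is the classical fact that for a hyperbolic (constant curvature $-1$) metric in geodesic normal form, the expansion of $h(x)$ terminates at order four. The plan is to write the curvature condition on $g=x^{-2}(dx^2+h(x))$ as an ODE in $x$ for the family $h(x)$. We then solve this ODE explicitly with initial data $h(0)=h_0$, $h'(0)=0$.

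\textbf{Step 1: the evolution equations.} Set $\overline g=dx^2+h(x)$. The level sets $\{x=c\}$ are equidistant for $\overline g$, with second fundamental form $\tfrac12 h'$. Let $S=h^{-1}h'$ be the associated endomorphism. The Gauss and Riccati equations for $\overline g$ give
\[
\overline R(\partial_x,\cdot,\partial_x,\cdot)=-\tfrac12 h''+\tfrac14 h' h^{-1}h'.
\]
The tangential curvature is $\overline R^{\,\mathrm{tan}}=R_{h(x)}$ plus quadratic terms in $h'$.

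Next, the curvature of $g=x^{-2}\overline g$ differs from that of $\overline g$ by the conformal change formula. With $|dx|_{\overline g}=1$ and $\overline\nabla^2 x=-\tfrac12 h'$, this gives
\[
x^2\operatorname{Rm}_g=\operatorname{Rm}_{\overline g}-|dx|^2\,\overline g\owedge\overline g/2+x\,(\overline\nabla^2 x)\owedge\overline g .
\]
We impose $\operatorname{Rm}_g=-\tfrac12 g\owedge g$, i.e. $g$ hyperbolic. The mixed components $(\partial_x,\partial_x)$ then yield $h''-\tfrac12 h'h^{-1}h'-x^{-1}h'=0$, up to checking signs. The fully tangential components yield a Gauss-type identity expressing $\operatorname{Rm}_{h(x)}$ in terms of $h'$ and $x^{-1}h'$.

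\textbf{Step 2: solving the ODE.} Write $h(x)=h_0(\mathcal A(x)\cdot,\mathcal A(x)\cdot)$ for an $h_0$-self-adjoint endomorphism $\mathcal A$. I expect (and will verify) that the $(\partial_x,\partial_x)$ equation becomes $\mathcal A''=x^{-1}\mathcal A'$. Together with $\mathcal A(0)=I$ and evenness, this forces $\mathcal A(x)=I-\tfrac{x^2}{2}B$ for a constant endomorphism $B$. Squaring, $h(x)=h_0\bigl((I-\tfrac{x^2}{2}B)^2\cdot,\cdot\bigr)=h_0-x^2\,h_0(B\cdot,\cdot)+\tfrac{x^4}{4}h_0(B^2\cdot,\cdot)$. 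This is exactly the claimed form once we identify $h_0(B\cdot,\cdot)=P_{h_0}$.

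An alternative route that avoids the ODE bookkeeping is to verify the formula directly. Take the candidate $h(x)$ and check that $g$ has sectional curvature $-1$ by the computation above. Then invoke uniqueness of the Graham--Lee normal form (and of Fefferman--Graham expansions for Einstein metrics when $n-1$ is odd/even) to conclude that this candidate is the $h(x)$.

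\textbf{Step 3: identifying $B$ (the main obstacle).} Expand the tangential equation at order $x^0$. The Gauss equation relating $\operatorname{Rm}_{h_0}$ to the second-order coefficient gives
\[
\operatorname{Rm}_{h_0}=-\,(h_0 B)\owedge h_0 ,
\]
i.e. the Weyl tensor of $h_0$ vanishes and $h_0B$ is the Schouten tensor. This is consistent, since $(Y,\mathfrak c)$ is conformally flat (locally conformal to $S^{n-1}$). Tracing twice in dimension $n-1$ recovers $h_0B=\frac{1}{n-3}\bigl(\operatorname{Ric}_{h_0}-\frac{R_{h_0}}{2(n-2)}h_0\bigr)$. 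Here $n\ge4$ is needed so that the boundary dimension satisfies $n-1\ge3$, the factor $n-3\neq0$, and $P$ is defined.

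The delicate points are the sign conventions in the conformal change formula. One must also check that the higher-order tangential equations hold automatically rather than imposing extra constraints. That consistency is where flatness of the Weyl part and the Codazzi condition ($P_{h_0}$ Codazzi, equivalent to vanishing Cotton tensor, which holds for conformally flat $h_0$) enter. As a sanity check, the hyperbolic metric on $X_\Gamma$ is locally the Poincaré metric with $h_0$ locally conformally flat. For flat $h_0$ the upper half-space model gives $h(x)=h_0$, which is consistent with the formula since $P_{h_0}=0$.
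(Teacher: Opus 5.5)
Your proposal takes a genuinely different route from the paper. The paper gives no derivation and simply quotes Theorem~7.4 of Fefferman--Graham. Your plan is self-contained and uses the right mechanism: the mixed curvature components give a radial ODE, and the Gauss equation at $x=0$ identifies the $x^2$ coefficient. It uses the full condition $\operatorname{Rm}_g=-\frac12 g\odot g$ (I write $\odot$ for the Kulkarni--Nomizu product), not just the Einstein condition, which is why the expansion terminates, and it shows where $n\ge 4$ enters. The ODE you state, $h''-\frac12 h'h^{-1}h'-x^{-1}h'=0$, is correct. However, the conformal change formula you display is off by powers of $x$: on the upper half-space, where $\overline g$ is flat, it would give $x^2\operatorname{Rm}_g=-\frac12\overline g\odot\overline g$. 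The correct version is $x^4\operatorname{Rm}_g=x^2\operatorname{Rm}_{\overline g}+x\,\overline\nabla^2x\odot\overline g-\frac12|dx|^2_{\overline g}\,\overline g\odot\overline g$ with $\overline\nabla^2x=+\frac12 h'$. Hyperbolicity is then equivalent to $x\operatorname{Rm}_{\overline g}+\overline\nabla^2x\odot\overline g=0$.

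\textbf{The gap is in Step~2.} Substituting $h=h_0\mathcal A^2$ into the ODE yields $\mathcal A''=x^{-1}\mathcal A'$ only if $\mathcal A$ commutes with $\mathcal A'$ and $\mathcal A''$. Nothing guarantees this a priori, and a direct substitution leaves non-commuting cross terms. So the step you defer to verification is exactly where termination of the expansion is decided. A clean fix is as follows.
\begin{enumerate}
\item Multiply the ODE by $x$ and use that $h$ is smooth up to $x=0$. This holds here because $\overline X_\Gamma$ is smoothly compactified, locally like the closed ball. Evaluating at $x=0$ gives $h'(0)=0$.
\item Set $V:=x^{-1}h^{-1}h'$. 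It is continuous on $[0,\epsilon_0)$ with $V(0)=V_0:=h_0^{-1}h''(0)$. For $x>0$ the ODE is equivalent to the regular Riccati equation $V'=-\frac{x}{2}V^2$.
\item Its unique solution is $V=V_0(I+\frac{x^2}{4}V_0)^{-1}$, which lies in the commutative algebra generated by $V_0$.
\item The linear equation $h'=x\,hV$ with $h(0)=h_0$ then forces $h=h_0(I+\frac{x^2}{4}V_0)^2$.
\end{enumerate}
This also removes your appeal to evenness, which should not be imported from general Fefferman--Graham theory. For a general Poincar\'e--Einstein metric, $h$ is determined by $h_0$ only modulo $O(x^{n-1})$ and need not be even.

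Some smaller points.
\begin{itemize}
\item \emph{Sign in Step~3.} With your normalization $\operatorname{Rm}_g=-\frac12 g\odot g$, the tangential equation reads $x(\operatorname{Rm}_{h(x)}-\frac18 h'\odot h')+\frac12 h'\odot h=0$. Dividing by $x$ and letting $x\to0$ gives $\operatorname{Rm}_{h_0}=-h_2\odot h_0=+(h_0B)\odot h_0$, where $h=h_0+x^2h_2+\cdots$. With the minus sign you wrote, you would conclude $h_0B=-P_{h_0}$. The round sphere confirms the plus sign: there $h(x)=(1-\frac{x^2}{4})^2h_0$ and $P_{h_0}=\frac12 h_0$.
\item \emph{Unneeded checks.} You need neither the higher-order tangential equations nor the Cotton/Codazzi condition, since $g$ is given to be hyperbolic. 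Those enter only in the converse statement.
\item \emph{Alternative route.} It does not close as stated. Graham--Lee uniqueness concerns the defining function of a fixed $g$, and formal uniqueness for Einstein metrics stops at order $x^{n-1}$, as noted above. So checking that your candidate is hyperbolic does not identify it with the $h(x)$ of the given metric. That identification needs exactly the rigidity your ODE argument supplies.
\end{itemize}
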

\begin{proof}
This is Theorem 7.4 in \cite{FeffermanGraham2012}.
\end{proof}
\begin{lemma}\label{lem: sff-levelset}
For $\varepsilon$ sufficiently small, the shape operators of the hypersurface $\{x=\varepsilon\}$ with respect to the metrics 
$g_\HH$ and $\overline g_\HH:=x^2g_\HH$, relative to the outward unit normals, are
\[
A_{\{x=\varepsilon\}}^{g_\HH}=I-\frac{\varepsilon}{2}h_{\varepsilon}^{-1}\partial_xh_x\vert_{x=\varepsilon},\qquad 
A_{\{x=\varepsilon\}}^{\overline {g_\HH}}=-\frac{1}{2}h_{\varepsilon}^{-1}\partial_xh_x\vert_{x=\varepsilon}
\]
Thus the corresponding mean curvatures are
\begin{align}\label{eq: MC-levelset}
H_{\{x=\varepsilon\}}^{g_\HH}=n-1+\varepsilon^2\operatorname{tr}(B_\varepsilon^{-1} h_0^{-1}P_{h_0}),\qquad 
H_{\{x=\varepsilon\}}^{\overline {g_\HH}}=\varepsilon\operatorname{tr}(B_\varepsilon^{-1} h_0^{-1}P_{h_0}),
\end{align}
where $B_x=I-\frac{x^2}{2}h_0^{-1}P_{h_0}$.
\end{lemma}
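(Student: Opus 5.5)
We prove Lemma~\ref{lem: sff-levelset} by a direct computation in the Graham--Lee collar $\mathcal U=[0,\epsilon_0)\times\partial X$, where $g_\HH=x^{-2}(dx^2+h_x)$ and $\overline{g_\HH}=dx^2+h_x$. First we treat the compactified metric $\overline{g_\HH}$. Since $|dx|_{\overline{g_\HH}}\equiv1$ and $dx^2+h_x$ has no cross terms, the unit normal to $\{x=\varepsilon\}$ is $\pm\partial_x$; the outward normal, pointing toward the boundary $x=0$, is $\overline\nu=-\partial_x$. For vectors $V,W$ tangent to the slice we have $\overline{\mathrm{II}}(V,W)=\overline g(\overline\nabla_V W,\overline\nu)$. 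The Koszul formula in product coordinates gives $\overline g(\overline\nabla_V W,\partial_x)=-\tfrac12\partial_x h_x(V,W)$, so $\overline{\mathrm{II}}=\tfrac12\partial_xh_x$ up to the sign fixed by the convention $A=-\nabla\nu$ or $+\nabla\nu$. Raising an index with $h_\varepsilon$ gives $A^{\overline{g_\HH}}=\mp\tfrac12 h_\varepsilon^{-1}\partial_xh_x|_{x=\varepsilon}$. We fix the sign to match the stated formula; this is consistent with the hyperbolic slices having $A\approx I$ for the outward normal, as in horospheres.

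Next we pass to $g_\HH=x^{-2}\overline{g_\HH}$ using the conformal change formula for second fundamental forms. For $g=e^{2\phi}\overline g$ with $\phi=-\log x$, the unit normals satisfy $\nu=e^{-\phi}\overline\nu=x\overline\nu$, and
\[
A^{g}=e^{-\phi}\bigl(A^{\overline g}+\overline\nu(\phi)\,I\bigr)
\]
with the same sign convention. Here $\overline\nu(\phi)=-\partial_x(-\log x)=1/x$. This yields
\[
A^{g_\HH}=\varepsilon\bigl(A^{\overline{g_\HH}}+\varepsilon^{-1}I\bigr)=I-\tfrac{\varepsilon}{2}h_\varepsilon^{-1}\partial_xh_x|_{x=\varepsilon}.
\]
If the sign conventions become confusing, we check the result against the exact horosphere case $h_x=h_0$ flat, where $A=I$.

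Finally we compute the traces using Lemma~\ref{lem: expansion-h}. Writing $S=h_0^{-1}P_{h_0}$, we have $h_x=h_0(I-\tfrac{x^2}{2}S)^2=h_0B_x^2$, because $S$ is $h_0$-self-adjoint and $h_0^{-1}P_{h_0}^2=S^2$ in endomorphism terms. Then $\partial_x h_x=h_0\cdot 2B_x\partial_xB_x=-2x\,h_0B_xS$, since $B_x$ and $S$ commute. Therefore
\[
h_\varepsilon^{-1}\partial_xh_x|_{\varepsilon}=-2\varepsilon B_\varepsilon^{-2}B_\varepsilon S=-2\varepsilon B_\varepsilon^{-1}S.
\]
Substituting gives $A^{\overline{g_\HH}}=\varepsilon B_\varepsilon^{-1}S$ and $A^{g_\HH}=I+\varepsilon^2B_\varepsilon^{-1}S$. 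Taking traces over the $(n-1)$-dimensional slice gives \eqref{eq: MC-levelset}. Here $H$ is the unnormalized trace, consistent with $H=n-1$ for horospheres.

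The only delicate point is the bookkeeping of sign and orientation conventions in the conformal change step. Everything else is algebra, made easy by the commutation of $B_x$ with $S$ and the perfect-square structure of \eqref{eq:lem-exp-Px}.
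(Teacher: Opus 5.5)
Your proposal is correct and follows essentially the same route as the paper: Koszul's formula for $\overline{g_\HH}=dx^2+h_x$ with $\overline\nu=-\partial_x$, then the conformal change rule with $\phi=-\log x$. Your formula $A^{g}=e^{-\phi}(A^{\overline g}+\overline\nu(\phi)I)$ is exactly Lemma~\ref{lem-sff-conformal}, and it holds for the convention $A(X)=\nabla_X\nu$, which is the convention your horosphere check singles out. The only difference is the last step: you use $h_x=h_0B_x^2$ and the fact that $B_x$ commutes with $h_0^{-1}P_{h_0}$ to get the full endomorphism identity $h_\varepsilon^{-1}\partial_xh_x|_{x=\varepsilon}=-2\varepsilon B_\varepsilon^{-1}h_0^{-1}P_{h_0}$, whereas the paper computes only its trace via Jacobi's formula for $\partial_x\log\det h_x$; your version yields the full shape operator, which is what the paper effectively uses later in the proof of Theorem~\ref{thm: cmc-in-quotient}.
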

\begin{proof}
The outward unit normals with respect to $g_\HH$ and $\overline{g}_\HH$ are $\nu=-x\partial_x$ and $\overline{\nu}=-\partial_x$,
respectively. Let $X, Y$ be vector fields on $\{x=\varepsilon\}$ and extend them so that $[X,\partial_x]=[Y,\partial_x]=0$. Then by Koszul's formula
    \begin{align*}
2\overline g_{\HH}
\bigl(\nabla_X^{\overline g_{\HH}}\partial_x,Y\bigr)
={}&
X\bigl(\overline g_{\HH}(\partial_x,Y)\bigr)
+\partial_x\bigl(\overline g_{\HH}(Y,X)\bigr)
-Y\bigl(\overline g_{\HH}(X,\partial_x)\bigr)\\
&-\overline g_{\HH}\bigl(X,[\partial_x,Y]\bigr)
+\overline g_{\HH}\bigl(\partial_x,[Y,X]\bigr)
+\overline g_{\HH}\bigl(Y,[X,\partial_x]\bigr).
\end{align*}
Note that since $\partial_x$ is orthogonal to the level set, and by the choice of the extensions of $X,Y$, all but the second term vanish. Therefore
\[
A_{\{x=\varepsilon\}}^{\overline g_{\HH}}=-\frac 12h_\varepsilon^{-1}\partial_x h_x\vert_{x=\varepsilon}.
\]
    Now $g_\HH=e^{2f}\overline g_{\HH}$ with $f=-\log x$. Since $e^{-f}=x$ and $\overline{\nu}(f)=\frac{1}{x}$, by Lemma \ref{lem-sff-conformal}
    \[
    A^{g_\HH}_{\{x=\varepsilon\}}=e^{-f}(\overline{\nu}(f)I+A^{\overline g_{\HH}}_{\{x=\varepsilon\}})=I-\frac{\varepsilon}{2}h_x^{-1}\partial_x h_x\vert_{x=\varepsilon}.
    \]

    To compute the mean curvatures, note that \eqref{eq:lem-exp-Px} can be reformulated by saying that for any $X,Y$ tangent to $\{x=\varepsilon\}$
    \[
    h_\varepsilon(X,Y)=h_0(B_\varepsilon X, B_\varepsilon Y).
    \]
    Thus in a local frame $\det h_\varepsilon=\det^2(B_\varepsilon)\det h_0$. By Jacobi's formula
    \begin{align*}
\operatorname{tr}\left(
h_\varepsilon^{-1}
\left.\partial_xh_x\right|_{x=\varepsilon}
\right)
=
\left.\partial_x\log\det h_x\right|_{x=\varepsilon}=
2\operatorname{tr}\left(
B_\varepsilon^{-1}
\left.\partial_xB_x\right|_{x=\varepsilon}
\right)=
-2\varepsilon\operatorname{tr}\left(
B_\varepsilon^{-1}h_0^{-1}P_{h_0}
\right).
\end{align*}
    
\end{proof}

\subsection{A smooth family of Schottky groups}\label{ssec: S-groups}
We now specialize further from the general class of convex cocompact hyperbolic manifolds to the more special class
of Schottky manifolds.  These are defined by subgroups $\Gamma$ which are particularly simple to describe. 

Suppose that we are given $N$ pairs of points $(p_1,q_1), \ldots, (p_N, q_N)$ where each $p_i$ and $q_j$ lie
on $S^{n-1}$. Let $\gamma_j$ denote the hyperbolic geodesic which connects $p_j$ to $q_j$.  For each $j$,
choose totally geodesic subspaces $H_j$ and $K_j$ which intersect $\gamma_j$ orthogonally at two points
$p_j'$ and $q_j'$, respectively, so that $p_j, p_j', q_j', q_j$ appear in that order on $\gamma_j$.  The subspace
$H_j$ encloses a ball $B'_j \subset S^{n-1}$ centered at $p_j$, and similarly $K_j$ encloses a ball $B_j''$ centered
at $q_j$. We assume that these balls $B_j', B_j''$, $j = 1, \ldots, N$ are mutually disjoint. Let $T_j$ denote the geodesic 
distance along $\gamma_j$ between $p_j'$ and $q_j'$, and finally, choose a hyperbolic (or loxodromic) element $g_j$
which preserves $\gamma_j$ and maps $H_j$ to $K_j$.   In terms of all this data, we then define
\[
\Gamma = \langle  g_1, \ldots, g_N \rangle.
\]
This is called a Schottky group, and using a purely geometric argument, it is straightforward to see that it acts 
discretely and propertly discontinuously on $\mathbb H^n$, and that $\Gamma$ is a free group on $N$ generators. 
We may take as a fundamental domain for this action the intersection over $j$ of the regions between the 
hyperplanes $H_j$ and $K_j$. For simplicity, below we assume that all the $T_j$ are equal, and write this group as $\Gamma_{N,T}$. 

Given any discrete group of isometries on $\mathbb H^n$, we define its \textit{critical exponent} $\delta(\Gamma)$
\[
\delta(\Gamma)=\inf\left\{s>0:\sum_{\gamma\in \Gamma} e^{-sd_\HH(o,\gamma o)}<\infty\right\}.
\]
It is a beautiful and important fact in this theory, see by \cite[Theorem 1]{Sullivan1984}, that $\delta(\Gamma)$ equals the Hausdorff 
dimension of $\Lambda_\Gamma$. 

\begin{theorem}
\label{thm:schottky-groups}
If $n\geq 4$, there exists a family of such Schottky groups $\Gamma_{N,T}$, with $T$ lying in an open interval $\mathcal I$,
such that the associated map of critical exponents, $F: T \mapsto \delta(\Gamma_{N,T})$ is analytic. Furthermore, there exist 
some $T_0\in \mathcal I$ such that $F(T_0)=\frac{n-3}{2}$ and $F(T)-\frac{n-3}{2}$ changes sign at $T_0$.
\end{theorem}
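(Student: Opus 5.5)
We would build the family explicitly and then combine three ingredients: analyticity of the critical exponent in the group data, continuity, and the limiting behaviour at the two ends of the parameter range.

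\textbf{Construction.} Fix $N\geq 2$, and fix $N$ disjoint pairs of geodesics $\gamma_j$ in $\mathbb H^n$ with endpoints $p_j,q_j$. For $T$ in an interval, take $g_j=g_j(T)$ to be the pure translation of length $T$ along $\gamma_j$. Choose $H_j,K_j$ orthogonal to $\gamma_j$, symmetric about the midpoint of $\gamma_j$, at distance $T$ from each other. The associated balls $B_j',B_j''$ shrink as $T$ grows, so they are disjoint for all $T>T_*$. This gives an interval $\mathcal I$ on which $\Gamma_{N,T}$ is a Schottky group in the sense above. The generators depend real-analytically on $T$, being conjugates of $\mathrm{diag}(e^{T},e^{-T},1,\dots)$ in $SO(n,1)$.

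\textbf{Analyticity.} Since the group is convex cocompact, Sullivan's theorem \cite{Sullivan1984} gives $\delta(\Gamma)=\dim_{\mathcal H}\Lambda_\Gamma$. We then use the standard thermodynamic formalism for conformal iterated function systems (Ruelle/Bowen). Concretely, $\Lambda_\Gamma$ is the limit set of the Markov system generated by the $2N$ conformal contractions $g_j^{\pm1}$ acting on the complementary balls. The exponent $\delta$ is the unique zero $s$ of the pressure
\[
P(T,s)=\lim_k \frac1k\log\sum_{|w|=k}\|D g_w\|^s .
\]
Equivalently, $\delta$ is characterised by the leading eigenvalue of the Ruelle transfer operator $\mathcal L_{T,s}$ being equal to $1$. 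This operator acts on holomorphic functions on complexified neighborhoods of the balls and is nuclear. Its leading eigenvalue is simple and isolated, so it depends analytically on $(T,s)$. Moreover $\partial_s P<0$, since all derivatives are $<1$ in norm. The analytic implicit function theorem then shows that $F$ is analytic; this is the result alluded to in \cite{analytic}.

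\textbf{Crossing $\frac{n-3}{2}$.} We examine the two ends of $\mathcal I$.
\begin{itemize}
\item As $T\to\infty$, the balls shrink with radii of order $e^{-T}$ and the contractions become uniformly strong. The Poincaré series $\sum e^{-s d(o,\gamma o)}$ is bounded by $\sum_k\bigl((2N-1)e^{-sT}\bigr)^k$, up to constants. Hence $\delta\lesssim \log(2N-1)/T\to0<\frac{n-3}{2}$; the inequality uses $n\geq4$.
\item We need $\delta>\frac{n-3}{2}$ somewhere. Here we have freedom: we choose $N$ large and the configuration such that, as $T\downarrow T_*$, the balls $B_j',B_j''$ nearly tile $S^{n-1}$. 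For instance, take a dense packing by many pairs. Then $\Lambda$ has dimension close to $n-2$, which we estimate from the pressure: $\sum\|Dg\|^s\geq1$ at $s$ near $n-2$ when the balls nearly fill the sphere. Since $\frac{n-3}{2}<n-2$, this works.
\item A cleaner alternative for this bound would let the family degenerate toward a Fuchsian-type group stabilising a totally geodesic $\mathbb H^{n-1}$ with cocompact quotient, or toward a packing limit with $\delta\to n-2$. Continuity of $\delta$ under algebraic convergence to a convex cocompact limit would then suffice.
\end{itemize}

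\textbf{Conclusion.} By continuity, $F-\frac{n-3}{2}$ takes both signs on $\mathcal I$. A nonconstant analytic function has isolated zeros, each of finite order. If every zero had even order, the sign would never change, which is impossible. Hence some zero $T_0$ has odd order and $F-\frac{n-3}{2}$ changes sign there. Alternatively, $F$ is strictly decreasing in $T$, since translation lengths grow, and the crossing is then transverse.

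\textbf{Main obstacle.} The hardest step is the lower bound $\delta>\frac{n-3}{2}$ within a genuinely Schottky (disjoint-balls) configuration. We would first try a quantitative count: take $2N$ disjoint balls with total $(n-2)$-dimensional content nearly maximal and use the Markov pressure estimate. The second difficulty is handling uniformity of the transfer operator near the degenerate end.
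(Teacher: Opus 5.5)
\textbf{There is a genuine gap.} The missing step is the one you yourself flag as the main obstacle: you never show that $\delta(\Gamma_{N,T})>\frac{n-3}{2}$ for any member of the family. The claim that $\sum\|Dg\|^s\geq 1$ at $s$ near $n-2$ once the caps nearly tile $S^{n-1}$ is asserted without an estimate, and the obvious estimate does not support it. If the ping-pong caps have angular radius $\theta$, then $e^{-T}=\tan^2(\theta/2)\approx\theta^2/4$. So a generator contracts by a factor of order $\theta^2$, not $\theta$, at points a unit distance from its fixed points. The $\sim\theta^{-(n-1)}$ distant generators therefore contribute about $\theta^{2s-(n-1)}$ to the transfer sum, which for small $\theta$ exceeds $1$ only when $s<\frac{n-1}{2}$. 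Anything beyond that rests on near-tangency contributions you do not control.

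Your fallback routes are named but not carried out. A degeneration of faithful representations of $F_N$ onto a cocompact lattice of $\mathrm{Isom}\,\HH^{n-1}$ would have a non-faithful limit, since such lattices are not free, and you give no continuity statement for $\delta$ in that situation. The ``packing limit with $\delta\to n-2$'' is simply asserted. Moreover, neither degeneration lies in your one-parameter family $T\mapsto\Gamma_{N,T}$. Note that you never need $n-2$: $\frac{n-1}{2}$ already exceeds $\frac{n-3}{2}$.

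\textbf{The missing idea is an elementary orbit count,} which is how the paper argues; it needs neither pressure nor near-tiling.
\begin{itemize}
\item Put all axes $\gamma_j$ through a common point $o$, with $H_j,K_j$ at distance $T/2$ from $o$, so that $d(o,g_j^{\pm1}o)=T$.
\item By the triangle inequality, $d(o,\gamma o)\leq kT$ for each of the $2N(2N-1)^{k-1}$ reduced words of length $k$. Hence the Poincar\'e series diverges for $s\leq\log(2N-1)/T$, i.e.\ $\delta\geq\log(2N-1)/T$.
\item With caps of angular radius $\theta$ one has $T=2\log\cot(\theta/2)=2\log\theta^{-1}+O(1)$. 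One can fit $2N\geq c_n\theta^{-(n-1)}$ disjoint caps in antipodal pairs; this is a definite fraction of $S^{n-1}$, not a near-tiling.
\item Therefore $\delta\geq\log(2N-1)/T\to\frac{n-1}{2}>\frac{n-3}{2}$ as $\theta\to0$.
\end{itemize}

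The order of choices matters: first choose $\theta$ (i.e.\ the starting $T$), and only then $N$. With that $N$ frozen, increasing $T$ shrinks the caps, so they stay disjoint, and your upper bound $\delta\to 0$ takes over. That bound needs $d(o,\gamma o)\geq (k-1)(T-C)$, with the constant inside the exponent rather than as a prefactor. This holds because distinct faces of the fundamental domain are at mutual distance at least $T-C$ for large $T$.

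Combined with your transfer-operator analyticity sketch and the odd-order-zero argument, this completes the proof. Your side claim that $F$ is strictly decreasing in $T$ is unproved and unnecessary; the paper explicitly avoids claiming monotonicity.
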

\begin{proof}
To simplify matters even further, we assume that every one of the geodesics $\gamma_j$ passes through a fixed point $o \in
\mathbb H^n$, and that $p_j', q_j'$ are at equal distances $T/2$ from $o$. (The argument in the general case is very similar, but
requires one extra estimation.)  Also, write $\Gamma_{N,T}$ simply as $\Gamma$.

Let $g$ be any element in $\Gamma$. Then $g$ can be uniquely expressed as a reduced word in the generators $g_j$ and
their inverses.  There are precisely $2N(2N-1)^{k-1}$ such words of length $k$, and for each $g$ corresponding to
such a word, by the triangle inequality, the distance $d(o, g\cdot o)$ is no greater than $k T$.  This implies that
\[
\sum_{\gamma\in \Gamma} e^{-sd(o,\gamma o)}\geq \sum_{k=1}^\infty 2N(2N-1)^{k-1}e^{-sk{T}}.
\]
This sum diverges if $(2N-1)e^{-s T} \geq 1$, hence
\[
\delta(\Gamma) \geq \frac{ \log (2N-1)}{T}.
\]

We now show that it is possible to choose $N$ and $T$ so that $\log (2N-1)/T > (n-3)/2$, while on the other hand, if $N$ is fixed,
we show that $\delta(\Gamma) \to 0$ as $T \to \infty$.  As we explain later, $\delta$ is continuous, and in fact analytic, in $T$,
so this will imply the result. 

For the first of these inequalities, write the spherical radius of each of the balls $B_j', B_j''$ as $\theta$; this depends on $T$.
For that given $T$, choose $N$ so that the union of these balls occupies at least some definite fraction of the volume of the entire 
sphere, say $2N (\sin \theta)^{n-1} \geq c_n > 0$, which in turn gives that
\[
\log (2N) + (n-1)\log \theta \geq c_n'.
\]
Next,
a formula in spherical trigonometry and some basic hyperbolic geometry yields that
\[
T=2\log\cot(\theta/2)+1 =2\log(\theta^{-1})+\mathcal O(1).
\]
Putting these two estimates together gives
\[
\frac{\log (2N-1)}{T} \geq \frac{ (n-1) \log \theta^{-1} + \mathcal O(1)}{ 2 \log \theta^{-1} + \mathcal O(1)}.
\]
Choosing $T$ sufficiently large (and thus $\theta$ sufficiently small), we conclude that $\delta(\Gamma) > (n-3)/2$. 
Note that here we have first chosen $T$ and only after that, $N$.

The second assertion states that now if $N$ is fixed and we increase $T$ even further, then $\delta(\Gamma) \to 0$.  
For this we use the fact that $\delta(\Gamma)$ also equals the Hausdorff dimension of the limit set.   For Schottky groups,
the limit set is a Cantor set, and can be written as the countable intersection of the finite union of balls in $S^{n-1}$
centered at translates of the points $p_j$ and $q_j$. Indeed, write any group element $g$ in reduced form as 
$g_{i_1}...g_{i_k}$, where $g_{i_j}\in \{g_1, g_1^{-1},..., g_k,g_k^{-1}\}$. Now consider the ball $B$ among $B_{i_k}'$ and $B_{i_k}''$ centered at the sink of the transformation $g_{i_k}$, and apply successively $g_{i_{k-1}}$,..., $g_{i_1}$ to $B$.  Denote the resulting set by $K_g$.\\
We claim that there exists a constant $C > 0$
such that the diameter of each one of these balls is bounded by $(C e^{-T})^{k-1}$.  This is most easily seen as follows. 
We transform the ball to the upper half-space model of $\mathbb H^n$ and for a given $j$, position $p_j$ at $\infty$ and $q_j$
at $0$.  Then $g_j$ corresponds to the dilation $z \mapsto e^{-T} z$. All of the other balls $B_{j'}$ lie within a fixed large
ball $B_R \subset \mathbb R^{n-1}$, and so their volumes are contracted by a factor proportional to $e^{-T}$, where the
constant of proportionality is determined by $R$.  Since $R$ can be chosen independently of $j = 1, \ldots, N$, and
since the Jacobian factors for the transformation between the ball and the upper half-space are uniform within this ball
of radius $R$, we see that there is a uniform rate of volume contractivity (if $T$ is chosen to be large enough).  

Returning to the ball model, observe that given any $\epsilon > 0$ there exists a $k_0$ such that the diameter
of each component of $K_g$ is less than $\epsilon$ when $w(g) \geq k_0$, where $w(g)$ denotes the reduced
wordlength of $g$. 

Now, one can verify that
\[
\Lambda_\Gamma = \bigcap_{k \geq 1} \bigcup_{g:\, w(g) = k}  K_g.
\]
Using the calculations above, given any $d > 0$, we estimate the $d$-dimensional Hausdorff measure
\[
\mathcal{H}^d (\Lambda_{\Gamma})\leq \sum_{k \geq k_0} \sum_{g: w(g)=k} (\operatorname{diam}K_g)^d
\leq \sum_{k \geq k_0}  (2N (2N-1)^{k-1}) (C e^{-T})^{(k-1)d} = \sum_{k \geq k_0} C_1 C_2^{k-1} e^{ (-T + C_3)(k-1)d}.
\]
For any given $d > 0$ we can certainly choose $T > d^{-1}\log C_2  + C_3$, hence this sum is as small
as desired provided we choose $k_0$ sufficiently large.   In other words, the $d$-dimensional Hausdorff
measure of $\Lambda_\Gamma$ equals $0$, for $T$ large enough.

We now point out that in an appropriate sense, $\Gamma_{N,T}$ depends smoothly, and even analytically, on $T$.
Indeed, denoting by $e_1, \ldots e_N$ the generators of the free group $F_N$, then $\Gamma_{N,T}$ is uniquely
determined by the assignments of elements $g_j \in \mathrm{Isom}\, \mathbb H^n$ to each $e_j$. The 
space of Schottky representations is thus an open subset of $(\mathrm{Isom}\, \mathbb H^n)^N$ and varying
$T$ certainly produces an analytic variation of this element. 

Our final claim is that $\delta(\Gamma_{N,T})$ is analytic in $T$.  For this we refer to Corollary 1.8 in \cite{analytic}.

Although we do not claim that $\delta(\Gamma)$ is monotone in $T$, the analyticity guarantees an isolated
crossing of the value $(n-3)/2$ at some $T_0$. 
\end{proof}

We conclude this section with the following
\begin{lemma}\label{lem:iso-p1}
For any Schottky group $\Gamma$ on $\mathbb H^n$, $n \geq 4$, the inclusion 
\[
Y_\Gamma := \Omega/\Gamma \hookrightarrow \overline{X}_\Gamma
\]
induces an isomorphism
\[
\iota_*: \pi_1(Y)  \longrightarrow \pi_1(\overline{X}_\Gamma).
\] 
\end{lemma}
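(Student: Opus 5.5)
We will prove the lemma by building an explicit model of $\overline{X}_\Gamma$ and of its boundary $Y_\Gamma$ from the fundamental domain of the Schottky group. Let $D\subset\mathbb H^n\sqcup\Omega_\Gamma$ be the closure, inside $\mathbb H^n\sqcup\Omega_\Gamma$, of the region lying outside all the half-spaces bounded by $H_j,K_j$. Equivalently, $D$ is the closed ball $\overline{B^n}$ with the $2N$ open half-balls cut off by $H_j$ and $K_j$ removed. Topologically $D$ is a closed $n$-ball. Its boundary consists of three kinds of pieces: the $2N$ disjoint closed $(n-1)$-disks $H_j$ and $K_j$ (closures taken in $\overline{B^n}$); the spherical region $S:=S^{n-1}\setminus\bigcup_j(B_j'\cup B_j'')$; and the $(n-2)$-spheres $\partial B_j',\partial B_j''$ along which these pieces meet.

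Since the $g_j$ pair $H_j$ with $K_j$, the quotient $\overline{X}_\Gamma$ is obtained from $D$ by identifying each $H_j$ with $K_j$ via $g_j$. This is a 1-handle attachment. Hence $\overline{X}_\Gamma$ is homeomorphic to a boundary connected sum of $N$ copies of $S^1\times D^{n-1}$, a handlebody of genus $N$, and $\pi_1(\overline{X}_\Gamma)\cong F_N$, freely generated by the loops dual to the $N$ identified disk pairs. This matches the fact that $\Gamma\cong F_N$.

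Likewise, $Y_\Gamma=\Omega_\Gamma/\Gamma$ is obtained from $S$ by gluing the boundary spheres $\partial B_j'$ and $\partial B_j''$ via $g_j$. Here $S$ is $S^{n-1}$ with $2N$ disjoint open balls removed. Gluing pairs of boundary spheres produces the connected sum of $N$ copies of $S^1\times S^{n-2}$, which is exactly the boundary of the handlebody. Now $\pi_1(S)$ is trivial because $n-1\ge 3$: removing balls from a sphere of dimension at least $3$ leaves it simply connected. The boundary spheres $S^{n-2}$ are connected and simply connected since $n-2\ge 2$. Therefore van Kampen, or equivalently the HNN/graph-of-spaces description with simply connected vertex space and simply connected, connected edge spaces, gives $\pi_1(Y_\Gamma)\cong F_N$. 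The generators are the loops $c_j$ that run in $S$ from a basepoint to $\partial B_j'$ and return from $\partial B_j''$ after crossing the gluing.

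It remains to check that $\iota_*$ sends generators to generators. The loop $c_j$ in $Y_\Gamma$ crosses the identified sphere $\partial B_j'\sim\partial B_j''$ once. In $\overline{X}_\Gamma$ the same loop crosses the identified disk $H_j\sim K_j$ once, which is exactly the loop dual to the $j$-th 1-handle. So $\iota_*$ maps the free basis $\{c_j\}$ to the free basis of $\pi_1(\overline{X}_\Gamma)$ and is an isomorphism. Alternatively, $\iota_*$ is a surjection $F_N\to F_N$, and free groups are Hopfian, so it is an isomorphism.

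Finally, $\overline{X}_\Gamma$ deformation retracts onto $X_\Gamma$, since it is a manifold with boundary and a collar exists, so $\pi_1(\overline{X}_\Gamma)\cong\pi_1(X_\Gamma)$.

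The main point needing care is the topological identification of $D$ and $S$: one must confirm that the closed half-spaces bounded by the $H_j,K_j$ are pairwise disjoint in $\overline{B^n}$, which follows from the disjointness of the balls $B_j',B_j''$. Beyond that, the argument is dimension counting, and it is precisely there that $n\ge4$ enters. For $n=3$ the gluing circles are not simply connected, which produces extra generators in $\pi_1(Y)$, as the paper's remark on the case $n=3$ observes.
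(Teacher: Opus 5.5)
Your proof is correct, and it goes further than the paper's.

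The paper uses the same fundamental-domain picture (a ball minus half-spaces, and a sphere minus balls). From it, the paper only concludes that every loop in $X_\Gamma$ can be homotoped into $Y_\Gamma$. That gives surjectivity of $\iota_*$; injectivity is left implicit, and the hypothesis $n\ge 4$ never visibly enters. This matters, because surjectivity alone also holds for $n=3$. There $\pi_1(Y_\Gamma)$ is a genus-$N$ surface group and $\iota_*$ has a large kernel.

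You instead compute both groups by van Kampen for graphs of spaces:
\begin{itemize}
\item for $\overline{X}_\Gamma$, the vertex space is $D$ and the edge spaces are the disks $\overline{H_j}\sim\overline{K_j}$;
\item for $Y_\Gamma$, the vertex space is $S$ and the edge spaces are the spheres $\partial B_j'\sim\partial B_j''$.
\end{itemize}
You use $n\ge 4$ exactly where it is needed: it makes $S$ and the gluing spheres simply connected, so $\pi_1(Y_\Gamma)\cong F_N$. Matching free bases, or using that $F_N$ is Hopfian, then gives the injectivity the paper omits.

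A few small corrections.
\begin{itemize}
\item $\overline{X}_\Gamma$ does not deformation retract onto its open interior $X_\Gamma$. A retract of a Hausdorff space is closed, and $X_\Gamma$ is dense and not closed. What the collar gives is that the inclusion $X_\Gamma\hookrightarrow\overline{X}_\Gamma$ is a homotopy equivalence, and that is all you need.
\item The paper allows arbitrary loxodromic $g_j$, so a handle may be a twisted $D^{n-1}$-bundle over $S^1$ rather than $S^1\times D^{n-1}$. This changes nothing about $\pi_1$.
\item For $n=3$, it is not only the gluing circles that fail to be simply connected. The vertex space $S$, a sphere with $2N$ holes, also fails to be simply connected.
\end{itemize}
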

\begin{proof}
Since the fundamental domain of $\Gamma$ is equal to the ball $B^n$ minus a finite number of half-spaces, and
similarly, $Y_\Gamma$ is the complement of a finite number of disjoint balls in $S^{n-1}$, it follows that any closed loop
in $X_\Gamma$ may be homotoped to a closed loop in $Y_\Gamma$. 
\end{proof}

\section{Proof of the main Theorem}
\begin{theorem}\label{thm: cmc-in-quotient}
There exists an open interval $I$ containing $T_0$ and $\varepsilon_0>0$ such that for $\varepsilon\in (0,\varepsilon_0)$, $T\in I$ 
there exist Schottky groups $\Gamma_T$ and connected, two sided hypersurfaces $\Sigma_{\varepsilon, T}$ in $X_T=\HH^n/{\Gamma}_T$ 
such that
\begin{enumerate}
\item $\Sigma_{\varepsilon, T}$ has constant mean curvature $H_{\varepsilon, T}$, and $T\mapsto H_{\varepsilon, T}$ takes values 
in a fixed neighborhood of $n-1$.
\item $\Sigma_{\varepsilon, T}$ is graphical over $Y$, and the pullback metric $\hat g_{\varepsilon,T}$ on $Y$ satisfies
\[
\hat g_{\varepsilon,T}=\varepsilon^{-2}(h_T+O_{C^0}(\varepsilon^2))
\]
for a family of metrics $h_T$ depending smoothly on $T$.
\item \[
\lVert\lvert A_{\Sigma_{\varepsilon, T}}\rvert^2-(n-1)\rVert_{L^\infty(\Sigma_{\varepsilon, T})}\leq C\varepsilon^2\lvert T-T_0\rvert+o(\varepsilon^2)
\]
for some constant $C>0$ and error $o(\varepsilon^2)$ independent from $T$.
\end{enumerate}
\end{theorem}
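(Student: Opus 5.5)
We plan to follow the Mazzeo--Pacard scheme. For $T$ in a small interval $I$ around $T_0$ from Theorem \ref{thm:schottky-groups}, we first choose metrics on $Y_T=\Omega_{\Gamma_T}/\Gamma_T$ and then perturb the level sets of the associated geodesic defining function.

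\textbf{Step 1: a smooth family of constant scalar curvature metrics.} Since $\Gamma_{N,T}$ depends analytically on $T$, we will identify all the $Y_T$ with a fixed manifold $Y$ via a smooth family of diffeomorphisms, for instance by quasiconformal conjugacy of the Schottky representations. The conformal classes $\mathfrak c_T$ then form a smooth family on $Y$. By Nayatani \cite{Nayatani1997} and Theorem \ref{thm:schottky-groups}, the Yamabe constant of $(Y,\mathfrak c_T)$ has the sign of $\frac{n-3}{2}-F(T)$, so it vanishes at $T_0$.

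At $T_0$ the Yamabe constant is zero, so the constant scalar curvature metric in $\mathfrak c_{T_0}$ is scalar flat. After normalizing the volume it is unique, and the linearized Yamabe operator $-\frac{4(n-2)}{n-3}\Delta$ on $Y$ (of dimension $n-1$) is invertible on functions with zero mean. An implicit function argument with a volume normalization should then give metrics $h_T\in\mathfrak c_T$ of constant scalar curvature $R_T$. These depend smoothly on $T$, with $R_{T_0}=0$ and hence $|R_T|\le C|T-T_0|$.

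We then let $x_T$ be the Graham--Lee geodesic defining function for $h_T$. By Lemma \ref{lem: expansion-h} and Lemma \ref{lem: sff-levelset}, the level set $\{x_T=\varepsilon\}$ has induced metric $\varepsilon^{-2}h_T(\varepsilon)=\varepsilon^{-2}(h_T+O(\varepsilon^2))$. Its mean curvature is
\[
H=n-1+\varepsilon^2\operatorname{tr}_{h_T}P_{h_T}+O(\varepsilon^4),\qquad \operatorname{tr}_{h_T}P_{h_T}=\frac{R_T}{2(n-2)}.
\]
The shape operator is $I+\varepsilon^2 h_T^{-1}P_{h_T}+O(\varepsilon^4)$, so
\[
|A|^2-(n-1)=2\varepsilon^2\operatorname{tr}P_{h_T}+O(\varepsilon^4)=\varepsilon^2\frac{R_T}{n-2}+O(\varepsilon^4).
\]
Note that $\operatorname{tr}P$ is constant here, but $P$ itself need not be.

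\textbf{Step 2: perturbation to exact CMC.} We seek $\Sigma_{\varepsilon,T}=\{x_T=\varepsilon e^{w}\}$ with $w$ a function on $Y$. The mean curvature in $g$ equals $n-1+\varepsilon^2\big(\frac{R_T}{2(n-2)}+L w\big)+$ higher-order terms. Here $L$ is the rescaled Jacobi operator: to leading order in $\varepsilon$, the normal variation in $x$ changes $H$ by $\varepsilon^2(-\Delta_{h_T}w)$ plus lower-order terms. The constant-coefficient term cancels because $|A|^2-\mathrm{Ric}(\nu,\nu)=0$ at leading order.

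Since the deviation from constant mean curvature is already $O(\varepsilon^4)$, we solve
\[
-\Delta_{h_T}w=\text{(error)}-\overline{\text{(error)}}
\]
on $Y$ and absorb the mean into the constant $H_{\varepsilon,T}$. A contraction mapping in $C^{2,\alpha}(Y)$ should give $w=O(\varepsilon^2)$ in $C^{2,\alpha}$, uniformly in $T\in I$. The operator is invertible modulo constants, and the constants are exactly what we allow $H$ to absorb.

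This yields $H_{\varepsilon,T}=n-1+\varepsilon^2\frac{R_T}{2(n-2)}+O(\varepsilon^4)$, which proves (1). It also gives graphicality and the metric expansion in (2). For (3), $w=O(\varepsilon^2)$ in $C^2$ changes $A$ by $O(\varepsilon^2\cdot\varepsilon^2)$ in the rescaled sense. Hence $|A|^2-(n-1)=\varepsilon^2 R_T/(n-2)+o(\varepsilon^2)\le C\varepsilon^2|T-T_0|+o(\varepsilon^2)$.

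\textbf{Main obstacle.} We expect the hardest part to be the careful bookkeeping of the $\varepsilon$-scalings in Step 2. We must check that $w$ contributes only $o(\varepsilon^2)$ to $|A|^2$, with all constants uniform in $T$. The cleanest route is to work with the compactified metric $\overline g=x^2 g$ and Lemma \ref{lem-sff-conformal}, where the level sets are nearly totally geodesic, and to track the conformal factor.

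A secondary obstacle is making the identification of the $Y_T$ with a fixed $Y$, together with the smooth dependence of $h_T$ on $T$, rigorous. For this we would rely on the analytic dependence of the Schottky representations on $T$ and on the openness of the space of Schottky representations.

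Connectedness of $\Sigma_{\varepsilon,T}$ follows since it is a graph over the connected manifold $Y$. Two-sidedness is automatic because $\Sigma_{\varepsilon,T}$ is a level set.
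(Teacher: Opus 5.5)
Your proposal is correct and follows essentially the paper's route: a smooth constant-scalar-curvature family $h_T$ obtained from the implicit function theorem at the scalar-flat metric, then graphs $x_T=\varepsilon e^{\pm w}$ over the level sets with $w=O(\varepsilon^2)$ in $C^{2,\alpha}$ (by inverting $\Delta_{h_T}$, perturbed by the small term $\tfrac{R_T}{n-2}w$, modulo constants), and finally the $|A|^2$ bound via $\overline g_T$ and Lemma \ref{lem-sff-conformal}; the paper differs only cosmetically, using the implicit function theorem for $\Pi\mathcal N$ extended to $\varepsilon=0$ instead of a contraction. Two small fixes: it is $|A|^2+\mathrm{Ric}(\nu,\nu)$, not $|A|^2-\mathrm{Ric}(\nu,\nu)$, that vanishes to leading order; and for (1) to feed into Theorem \ref{thm:main}, you should record, as the paper does, that $T\mapsto H_{\varepsilon,T}$ is continuous and that $R_T$ changes sign at $T_0$, so that $H_{\varepsilon,T}$ sweeps out an interval around $n-1$.
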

\begin{proof}
Henceforth $N$ is fixed and will be dropped from the notation. The spaces $X_{\Gamma_{T}}$ are mutually diffeomorphic, so 
there exists a smooth family of diffeomorphisms $\Psi_T$, with $\Psi_{T_0}$ equal to the identity, such that $g_T = 
\Psi_T^* g_{\mathrm{hyp}}$ is a smooth family of conformally compact hyperbolic metrics on $X = X_{T_0}$. The corresponding
conformal infinities $\mathfrak c(g_T)$ are a smooth family of conformal classes on the fixed manifold $Y = \partial X$. 

We now quote Theorem 3.3 in \cite{Nayatani1997}, which states that the Yamabe constant of $(Y_T, \mathfrak c(g_{\mathrm{hyp}}))$ 
(and hence of $(Y, \mathfrak c(g_T))$) has the same sign as
\begin{align}\label{eq: sign-Nayatani}
\frac{n-3}{2}-\delta(\Gamma_T).
\end{align}
By Lemma \ref{lem: Yamabe-metrics} below, there exists a family of metrics $h_T\in \mathfrak c_T$ on $Y$, depending smoothly 
on $T$ for $T$ near $T_0$, such that $h_T$ has constant scalar curvature $R_T$, where $R_T$ has the same sign as \eqref{eq: sign-Nayatani}.
Denote by $x_T$ the family of associated geodesic defining functions on $\overline{X}$. These too depend smoothly on $T$, and
for $T$ sufficiently close to $T_0$, define a common collar of $Y$. Fixing $k\in \mathbb N$ and $\alpha\in (0,1)$, set
\[
C^{k,\alpha}_0(Y):=\left\{u\in C^{k,\alpha}(Y): \int_Yu\, d\mu_{h_{T_0}}=0\right\}.
\]
For $\varepsilon>0$ and $w\in C^{2,\alpha}_0(Y)$ sufficiently small, denote by $\Sigma_{T,\varepsilon,w}$ the hypersurface given 
in this collar neighborhood given as the graph of the function
\[
x_T(y)=\varepsilon e^{-w(y)}.
\]
The mean curvature of $\Sigma_{T,\varepsilon,w}$ is a function $H(T,\varepsilon,w)$, and we write
\[
\mathcal{N}(T,\varepsilon,w):=\frac{n-1-H(T,\varepsilon,w)}{\varepsilon^2}.
\]
By Lemma \ref{lem: N-operator}, $\mathcal{N}$ depends smoothly on its variables, and extends smoothly to $\varepsilon=0$.
    For any integrable function $f$ on $Y$, set
    \[
    \Pi f:=f-\fint_Y fd\mu_{h_{T_0}}
    \]
    and set
    \[
    \mathcal{F}(T, \varepsilon,w):=\Pi\mathcal N(T, \varepsilon,w).
    \]
    Then $\Sigma_{(T, \varepsilon,w)}$ has constant mean curvature if and only if $\mathcal{F}(T, \varepsilon,w)=0$.
    By Lemma \ref{lem: N-operator}, $\mathcal{F}$ also extends smoothly to $\varepsilon=0$, and
    \[
    \mathcal{F}(T_0,0,0)=0
    \]
    Moreover,
    \begin{align}
    \label{eq: expansion-F}
        D_w\mathcal{F}(T_0,0,0)[v]=\Delta_{h_{T_0}}v.
    \end{align}
    This is an isomorphism $C_0^{2,\alpha}(Y)\to C^{0,\alpha}_0(Y)$.
    By the implicit function theorem (applied to any smooth extension of $\mathcal F$ around $(T_0,0)$), there exists a neighborhood $U$ of $(T_0,0)$ in $\mathbb R\times\mathbb R_{\geq 0}$ and a family
    \begin{align}
        w_{T,\varepsilon}\in C_0^{2,\alpha},
    \end{align}
    depending smoothly on $(T,\varepsilon)\in U$, such that
    \[
    \mathcal{F}(T,\varepsilon,w_{T,\varepsilon})=0.
    \]
    Note that by Lemma \ref{lem: N-operator}, $\mathcal F(T,0,0)=0$, therefore $w_{T,0}=0$.
Now set
\[
c(T,\varepsilon):=\mathcal{N}(T,\varepsilon,w_{T,\varepsilon}).
\]
As the expression on the right is constant, we can regard $c$ as a continuous real valued function on $U$.
Comparing the definition of $\mathcal{N}$ with \eqref{eq: MC-levelset} we see that
\begin{align}
    \label{eq: expansion-N-e}
    \mathcal{N}(T,\varepsilon,0)=&-\operatorname{tr}\left(\left(I-\frac{\varepsilon^2}{2}h_T^{-1}{P_T}\right)^{-1}h_T^{-1}{P_T}\right).
\end{align}
Then
\[
c(T,0)=-\operatorname{tr}(h_T^{-1}{P_T})=-\frac{R_T}{2(n-2)}.
\]
Now choose numbers $T_-$ and $T_+$ with $T_-<T_0<T_+$, sufficiently close to $T_0$ and choose $\varepsilon_0>0$ such that
\[
[T_-, T_+]\times [0,\varepsilon_0]\subset U
\]
and for $\varepsilon\in (0,\varepsilon_0)$
\[
c(T_-, \varepsilon)c(T_+,\varepsilon)<0.
\]
For this, we use the fact that $\delta(\Gamma_T)$ is a non-constant analytic function of $T$, and $\delta(\Gamma_T)-c_n$ changes sign at $T=T_0$. Since $R_T$ has the same sign as \eqref{eq: sign-Nayatani}, one can find such $T_-$ and $T_+$.\\
Therefore, the hypersurfaces $\Sigma_{T,\varepsilon,w_{T,\varepsilon}}$ for $T\in (T_-,T_+)$ have constant mean curvature $H_{T,\varepsilon}$ varying continuously in a fixed interval containing $n-1$. This proves the first assertion.\\
Next, note that expanding \eqref{eq: expansion-N-e} in $\varepsilon$ and applying $\Pi$ we obtain
    \[
    \mathcal{F}(T,\varepsilon,0)=-\frac{\varepsilon^2}{2}\Pi\lvert P_{h_T}\rvert^2+O_{C^{0,\alpha}}(\varepsilon^4).
    \]
    Then for any $T\in I$, $\partial_\varepsilon\mathcal{F}(T,0,0)=0$.
    Therefore by \eqref{eq: expansion-F}
    \[
    0=\partial_\varepsilon\mathcal{F}(T,\varepsilon,w_{T,\varepsilon})\vert_{\varepsilon=0}=D_w\mathcal{F}(T,0,0)[\partial _\varepsilon w_{T,\varepsilon}]=\Pi\left(\Delta_{h_{T}}\partial_{\varepsilon}w_{T,\varepsilon}+\frac{R_T}{n-2}\partial_{\varepsilon}w_{T,\varepsilon}\right).
    \]
    Since $w_{T,\varepsilon}$ has zero average, so does
$\left.\partial_\varepsilon w_{T,\varepsilon}\right|_{\varepsilon=0}$.
As the operator
$L_T:C^{2,\alpha}_0(Y)\to C^{0,\alpha}_0(Y)$
is invertible for $T$ sufficiently close to $T_0$,
we deduce that $\partial_\varepsilon w_{T,\varepsilon}
|_{\varepsilon=0}=0.$
Therefore
    \begin{align}
        \label{eq: w-quadratic}
    \lVert w_{T,\varepsilon}\rVert_{C^{2,\alpha}(Y)}=O(\varepsilon^2).
    \end{align}
Now write the hyperbolic metric locally as
\begin{align}\label{eq: local-hyp-metric-S}
g_T=x_T^{-2}(dx_T^2+h_{T,x}),\quad h_{T,x}=h_T-x_T^2P_{h_T}+\frac{x_T^4}{4}P_{h_T}^2
\end{align}
(the second identity was obtained in Lemma \ref{lem: expansion-h}). 
Writing the graph map as $F_{T,\varepsilon}(y)=(r_{T,\varepsilon}(y), y)$, we have
\[
F_{T,\varepsilon}^\ast g_T=r_{T,\varepsilon}^{-2}(dr_{T,\varepsilon}^2+h_{T,r_{T,\varepsilon}})=\varepsilon^{-2}e^{2w_{T,\varepsilon}}h_{T,r_{T,\varepsilon}}+dw_{T,\varepsilon}^2.
\]
Note that
\[
e^{2w_{T,\varepsilon}}h_{T,r_{T,\varepsilon}}- h_T=O(\varepsilon^2)
\]
for $\varepsilon\to 0$ by \eqref{eq: w-quadratic} and the second expression in \eqref{eq: local-hyp-metric-S}, therefore the second assertion in the Theorem holds.\\
To prove the third assertion, write the metric of the ideal compactification of $(X, g_T)$ as
\[
\overline{g_T}=x_T^2 g_T=dx_T^2+h_{T,x}
\]
By Lemma \ref{lem: sff-levelset}, the shape operator of the level set $\{x_T=\varepsilon\}$ with respect to $\overline{ g_T}$ is given by
\[
\overline{A}_{\{x_T=\varepsilon\}}=-\frac{1}{2}h_{T,\varepsilon}^{-1}\partial_x h_{T,x}\vert_{x=\varepsilon}=\varepsilon h_T^{-1}P_{h_T}+O(\varepsilon^3).
\]
Since $\{x_T=\varepsilon\}$ and $\Sigma_{T,\varepsilon}$ are graph over $Y$ for functions whose difference is $O_{C^2}(\varepsilon^3)$, we have that the shape operator of $\Sigma_{T,\varepsilon}$ with respect to $\overline{g_T}$ is
\[
\overline A_{\Sigma_{T,\varepsilon}}=\varepsilon h_T^{-1}P_{h_T}+O(\varepsilon^3).
\]
Now note that $\overline{\nu}$, the outward pointing unit normal to $\Sigma_{T,\varepsilon}$ with respect to $\overline g_T$, satisfies
\[
    \overline{\nu}(x_T)=-(1+\lvert dr_{T,\varepsilon}\rvert^2_{h_{T, r_{T,\varepsilon}}})^{-\frac{1}{2}}.
\]
By \eqref{eq: w-quadratic}, there holds $\lvert dr_{T,\varepsilon}\rvert_{h_{T, r_{T,\varepsilon}}}=O(\varepsilon^3)$, and thus $\overline \nu(x_T)=-1+O(\varepsilon^6)$.
Therefore by Lemma \ref{lem-sff-conformal}
\[
A_{\Sigma_{T,\varepsilon}}=-\overline \nu (x_T)I+x_T\overline A_{T,\varepsilon}=I+\varepsilon^2h_T^{-1}P_{h_T}+O(\varepsilon^4).
\]
Hence
\[
\lvert A_{\Sigma_{T,\varepsilon}}\rvert^2=n-1+\frac{R_T}{n-2}\varepsilon^2+O(\varepsilon^4).
\]
Since $R_T=O(\lvert T-T_0\rvert)$, the statement follows.
\end{proof}
For $T\in I$ let
\[
\pi_T:\HH^n\to X_{T}
\]
denote the natural quotient map, and for $\varepsilon\in (0,\varepsilon_0)$ set
\[
\tilde \Sigma_{\varepsilon,T}:=\pi_T^{-1}(\Sigma_{\varepsilon,T}).
\]
Note that since $\pi_T$ is a local isometry, $\tilde \Sigma_{\varepsilon,T}$ has constant mean curvature $H_{\varepsilon, T}$ in $\HH^n$, with $H_{\varepsilon, T}$ equal to the mean curvature of $\Sigma_{\varepsilon,T}$ in $X_T$.
\begin{theorem}
\label{thm:lift}
    If $I$ and $\varepsilon_0$ are chosen to be sufficiently small, the hypersurface $\tilde \Sigma_{\varepsilon,T}$ is simply connected, properly embedded, two-sided and complete, and it is strictly strongly stable.
    Moreover, it is not totally umbilical and has infinitely many ends.
\end{theorem}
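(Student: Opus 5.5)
The proof splits into topology and analysis, with the uniform spectral bound being the main work.

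\textbf{Topology.} Since $\pi_T$ is a covering map and $\Sigma_{\varepsilon,T}$ is a compact embedded two-sided hypersurface in $X_T$, its preimage $\tilde\Sigma_{\varepsilon,T}$ is closed in $\HH^n$. It is embedded, and it is properly embedded because $\pi_T$ is proper onto its image restricted to closed sets. It is complete by Hopf--Rinow, and two-sided because $H_{\varepsilon,T}\neq 0$.

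Connectedness and simple connectivity come from Lemma~\ref{lem:iso-p1}. The hypersurface $\Sigma_{\varepsilon,T}$ is a graph over $Y$, hence isotopic to $Y$ inside $\overline X_T$. So the inclusion induces an isomorphism $\pi_1(\Sigma_{\varepsilon,T})\to\pi_1(X_T)\cong\Gamma_T$. By covering theory, the restriction of $\pi_T$ to each component of $\tilde\Sigma_{\varepsilon,T}$ is the cover associated to the kernel of $\pi_1(\Sigma)\to\pi_1(X_T)$. This kernel is trivial, so $\tilde\Sigma_{\varepsilon,T}$ is the universal cover of $\Sigma_{\varepsilon,T}$. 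Surjectivity of the map on $\pi_1$ gives connectedness: the number of components is the index of the image.

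\textbf{Ends and non-umbilicity.} The deck group $F_N$ acts cocompactly, so $\tilde\Sigma_{\varepsilon,T}$ is quasi-isometric to the Cayley graph of $F_N$ (\v{S}varc--Milnor). That graph is a $2N$-regular tree with infinitely many ends, and the number of ends is a quasi-isometry invariant. A complete connected totally umbilic hypersurface in $\HH^n$ is a sphere, horosphere, or equidistant hypersurface, each with at most two ends, and with one end when $H$ is near $n-1$ and $n\ge4$. So $\tilde\Sigma_{\varepsilon,T}$ is not umbilic.

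\textbf{Strict stability.} This is the main obstacle: we need a lower bound on the bottom of the spectrum $\lambda_0(\tilde\Sigma_{\varepsilon,T})$ that is uniform in $T$ and scales correctly in $\varepsilon$.

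1. By Theorem~\ref{thm: cmc-in-quotient}(2), the lifted metric is $\varepsilon^{-2}(\tilde h_T+O(\varepsilon^2))$ on $\tilde Y$. The $O(\varepsilon^2)$ error is a bounded perturbation of $\tilde h_T$, so $g$ is uniformly bi-Lipschitz, with constant $1+O(\varepsilon^2)$, to $\varepsilon^{-2}\tilde h_T$.

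2. The Rayleigh quotient is comparable under bi-Lipschitz change of metric, and it scales by $\varepsilon^2$ under the dilation. Together these give $\lambda_0(\tilde\Sigma_{\varepsilon,T})\ge \tfrac12\varepsilon^2\lambda_0(\tilde Y,\tilde h_T)$.

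3. By Brooks' theorem, $\lambda_0(\tilde Y,\tilde h_{T})>0$, since $F_N$ is non-amenable and $Y$ is compact.

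4. For uniformity in $T$, use the smooth family $h_T$ on the fixed manifold $Y$. The metrics $h_T$ are uniformly bi-Lipschitz to $h_{T_0}$ for $T\in I$, so the same comparison gives $\lambda_0(\tilde Y,\tilde h_T)\ge c>0$ uniformly. Brooks' inequality could also be made quantitative via a Cheeger constant bound for the Cayley graph, but the comparison to $T_0$ suffices.

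5. Then for $f\in C_c^\infty$,
\[
Q(f)\ge \int\bigl(|\nabla f|^2-\|\,|A|^2-(n-1)\|_\infty f^2\bigr)\ge \varepsilon^2\bigl(c/2-C|T-T_0|-o(1)\bigr)\int f^2 ,
\]
using Theorem~\ref{thm: cmc-in-quotient}(3), which lifts because $\pi_T$ is a local isometry.

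6. Shrinking $I$ and $\varepsilon_0$ makes the bracket at least $c/4$, which gives strict strong stability with $\eta=c\varepsilon^2/4$.
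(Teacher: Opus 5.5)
Your proposal is correct and follows essentially the same route as the paper. Both arguments use Lemma~\ref{lem:iso-p1} to obtain connectedness and simple connectivity of the lift. Both use Brooks' theorem together with the rescaled-metric comparison, made uniform in $T$ by closeness of $h_T$ to $h_{T_0}$, to get $\lambda_0\gtrsim\varepsilon^2$, which is then combined with Theorem~\ref{thm: cmc-in-quotient}(3). Both get infinitely many ends from \v{S}varc--Milnor. There are two harmless slips: properness should be justified simply by $\tilde\Sigma_{\varepsilon,T}$ being a closed embedded submanifold (a covering map with infinite fibres is not proper), and complete connected umbilic hypersurfaces in $\HH^n$, $n\ge 3$, have at most one end, not two.
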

\begin{proof}
For fixed $\varepsilon$ and $T$ set $\Sigma:=\Sigma_{\varepsilon,T}$ and $\tilde\Sigma:=\tilde\Sigma_{\varepsilon,T}$.
    As $\Sigma$ is a graph over $Y$, the inclusion of $\Sigma$ in $\overline{X}_T$ is homotopic to the inclusion of $Y$ in $\overline{X}_T$, therefore by Lemma \ref{lem:iso-p1}, the induced map
    \[
    (\iota_{\Sigma})_\ast:\pi_1( \Sigma)\to \pi_1(\overline X_T)\simeq\pi_1(X_T)
    \]
    is an isomorphism. The surjectivity of this map implies that $\tilde \Sigma$ is connected, while the injectivity implies that $\tilde \Sigma$ is simply connected. Thus $\tilde\Sigma$ is a universal cover of $\Sigma$.
    Since $\pi_T|_{\widetilde\Sigma}:\widetilde\Sigma\to\Sigma$ is a
Riemannian covering and $\Sigma$ is complete, the induced metric on
$\widetilde\Sigma$ is complete. Moreover, a global unit normal to
$\Sigma$ lifts to a global unit normal to $\widetilde\Sigma$, so
$\widetilde\Sigma$ is two-sided. As $\Sigma$ is embedded in $X$ as a graph over $Y$, and $\pi_T$ is a covering map, $\tilde \Sigma$ is embedded in $\HH^n$. Moreover, since \(\Sigma\) is closed in \(X_T\),
\(\widetilde\Sigma\) is closed in \(\mathbb H^n\). Hence
\(\widetilde\Sigma\) is properly embedded.\\
Next we show that $\tilde \Sigma$ is strictly strongly stable. Let $q:\tilde Y\to Y$ be the universal covering of $Y$. Then the graph map $F_{\varepsilon, T}: Y\to \Sigma$ induces a diffeomorphism
\[
\tilde F_{\varepsilon, T}: \tilde Y\to \tilde \Sigma.
\]
Denote by $g_{\tilde \Sigma}$ the metric induced on $\tilde \Sigma$ by $g_{\HH}$ (for this metric, $\pi_T \vert_{\tilde \Sigma}$ is a local isometry).
By property $(2)$ in Theorem \ref{thm: cmc-in-quotient}, the pull-back metric $g_{\varepsilon, T}:= (\tilde F_{\varepsilon, T})^\ast g_{\tilde \Sigma}$ is of the form
\[
g_{\varepsilon, T}=\varepsilon^{-2}q^\ast h_T+O(1).
\] 
We set
\begin{align}\label{eq: cpct-metric-conv}
    {\gamma}_{\varepsilon, T}:=\varepsilon^2g_{\varepsilon, T}=q^\ast h_T+O(\varepsilon^2).
\end{align}
For a complete Riemannian manifold \((M,g)\), let
\[
\lambda_0(M,g)
:=
\inf_{\substack{f\in C_c^\infty(M)\\f\neq0}}
\frac{
\displaystyle\int_M|\nabla^g f|_g^2\,d\mu_g
}{
\displaystyle\int_M f^2\,d\mu_g
}
\]
denote the bottom of the spectrum of its Laplacian. Then
\begin{align}\label{eq: equivalence-lambda0}
\lambda_0(\tilde \Sigma, g_{\tilde\Sigma})=\lambda_0(\tilde Y,  g_{\varepsilon, T})=\varepsilon^2\lambda_0(\tilde Y, {\gamma}_{\varepsilon, T}).
\end{align}
We will now show that
\begin{align}
    \label{eq: lower-bound-spectrum}
    \lambda_0(\tilde Y,q^\ast h_{T_0})>c_0>0
\end{align}
for some constant $c_0$. By \eqref{eq: cpct-metric-conv} and \eqref{eq: equivalence-lambda0}, this will imply that, for $I$ and $\varepsilon_0$ sufficiently small,
\begin{align}\label{eq: lower-bound-lambda-Sigma}
    \lambda_0(\tilde \Sigma, g_{\tilde \Sigma})>\frac{c_0}{2}\varepsilon
    ^2.
\end{align}
To see that \eqref{eq: lower-bound-spectrum} holds, recall that $(\tilde Y, q^\ast h_{T_0})$ is a Riemannian universal cover of $(Y, h_{T_0})$, and
\[
\pi_1(Y)\simeq \pi_1(X)\simeq F_N,
\]
for some $N\geq 2$.
Therefore $\pi_1(Y)$ is non-amenable. By \cite[Theorem 1]{Brooks1981}, this implies that $\lambda_0(\tilde Y,q^\ast h_{T_0})>0$.\\
Now for any $f\in C^\infty_c(\tilde \Sigma)$, by \eqref{eq: lower-bound-lambda-Sigma} and $(3)$ in Theorem \ref{thm: cmc-in-quotient},
\begin{align*}
    Q(f)=&\int_{\tilde \Sigma}(\lvert\nabla f\rvert^2-(\lvert A_{\tilde \Sigma}\rvert^2-(n-1))f^2)d\mu_{g_{\tilde \Sigma}}\\
    \geq &\int_{\tilde \Sigma}(\lambda_0(\tilde \Sigma, g_{\tilde \Sigma})-\varepsilon^2C\lvert T-T_0\rvert+o(\varepsilon^2))f^2 d\mu_{g_{\tilde \Sigma}}\\
    \geq &\varepsilon^2\left(\frac{c_0}{2}-C\lvert T-T_0\rvert+o_{\varepsilon}(1)\right)\int_{\tilde \Sigma}f^2 d\mu_{g_{\tilde \Sigma}}.
\end{align*}
If $I$ and $\varepsilon_0$ are chosen sufficiently small, the expression in brackets is larger than $\frac{c_0}{4}$. This completes the proof of strict strong stability.\\
The group $\Gamma_T$ acts properly, cocompactly, and isometrically on $\widetilde\Sigma_{\varepsilon,T}$. Therefore, by the Švarc-Milnor lemma \cite[Proposition I.8.19]{BridsonHaefliger1999}, $\widetilde\Sigma_{\varepsilon,T}$ is quasi-isometric to a Cayley graph of \(\Gamma_T\simeq F_N\) with $N\geq 2$.
Since $F_N$ has infinitely many ends and the number of ends is invariant under quasi-isometries of proper geodesic spaces \cite[Proposition I.8.29]{BridsonHaefliger1999}, $\widetilde\Sigma_{\varepsilon,T}$ has infinitely many ends.
Therefore, since simply connected, complete, totally umbilical hypersurfaces in $\mathbb{H}^n$ are classified and have at most one end, we conclude that $\tilde \Sigma$ is not umbilical.
\end{proof}

\begin{remark}
A slightly simpler argument can be given for the existence of CMC hypersurfaces in $\HH^n$ with mean curvature $H$ either strictly greater or strictly smaller than $n-1$, and with $|H-(n-1)|$ arbitrarily small. In fact, one can choose a group $\Gamma_T$ as in Theorem \ref{thm:schottky-groups} such that the corresponding constant scalar curvature $R_T$ of $Y$ is small and has the chosen sign (see the proof of Theorem \ref{thm: cmc-in-quotient}). For this fixed $T$, one can construct CMC hypersurfaces by deforming the level sets $\{x_T=\varepsilon\}$ for all sufficiently small $\varepsilon$, as above.
Alternatively, one can apply Theorem 5.1 in \cite{MazzeoPacard2011}.
The resulting mean curvature $H_\varepsilon$ is given by \eqref{eq: H(e,t)}, so $H_\varepsilon-(n-1)$ has the chosen sign for small $\varepsilon$. Since $H_\varepsilon$ depends continuously on $\varepsilon$ and tends to $n-1$ for $\varepsilon\to 0$, these hypersurfaces realize every
mean curvature in an  interval adjacent to $n-1$. Choosing the
opposite sign of $R_T$ covers an interval on the other side of $n-1$. Using Theorem \ref{thm:lift}, one can then show that the lift of the hypersurface satisfies the desired properties.
    This argument allows one to avoid the analysis of the dependence of the construction on $T$.
\end{remark}

\appendix

\section{Technical computations}
We collect here a few technical results. See the proof of Theorem \ref{thm: cmc-in-quotient} for notation.
\begin{lemma}\label{lem: Yamabe-metrics}
    There exists a family of metric $h_T\in \mathfrak{c}_T$ on $Y$, varying smoothly on $T$ for $T$ around $T_0$, such that $h_T$ has constant scalar curvature $R_T$, where $R_T$ has the same sign as \eqref{eq: sign-Nayatani}.
\end{lemma}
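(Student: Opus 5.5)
We need a smooth family $h_T\in\mathfrak c_T$ of constant scalar curvature metrics near $T_0$. The plan is to anchor at $T_0$ and then continue by the implicit function theorem.

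\emph{Step 1: the metric at $T_0$.} At $T=T_0$ we have $\delta(\Gamma_{T_0})=\frac{n-3}{2}$, so Theorem 3.3 of \cite{Nayatani1997} gives Yamabe constant zero for $(Y,\mathfrak c_{T_0})$. By the solution of the Yamabe problem in the null case (elementary here: take the first eigenfunction of the conformal Laplacian), there is $h_{T_0}\in\mathfrak c_{T_0}$ with scalar curvature $R_{T_0}=0$. Normalize its volume to $1$.

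\emph{Step 2: the functional equation.} Fix a smooth family of background metrics $\bar h_T\in\mathfrak c_T$ with $\bar h_{T_0}=h_{T_0}$; one can take the restriction to $Y$ of $\bar x^2\Psi_T^*g_{\mathrm{hyp}}$ for a fixed defining function $\bar x$. Write $m=n-1=\dim Y$ and $L_T=-\frac{4(m-1)}{m-2}\Delta_{\bar h_T}+R_{\bar h_T}$ for the conformal Laplacian. Seek $h_T=u^{4/(m-2)}\bar h_T$ with $u>0$ and $\int_Y u^{2m/(m-2)}d\mu_{\bar h_T}=1$, solving
\[
L_Tu=\lambda u^{\frac{m+2}{m-2}} .
\]
Define $\Phi(T,u,\lambda)=\bigl(L_Tu-\lambda u^{\frac{m+2}{m-2}},\ \int_Y u^{\frac{2m}{m-2}}d\mu_{\bar h_T}-1\bigr)$ as a map $\mathbb R\times C^{2,\alpha}\times\mathbb R\to C^{0,\alpha}\times\mathbb R$. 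It is smooth near $(T_0,1,0)$ and vanishes there.

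\emph{Step 3: invertibility of the linearization.} This is the key step. At $(T_0,1,0)$ the linearization is
\[
(v,\mu)\mapsto\Bigl(-\tfrac{4(m-1)}{m-2}\Delta_{h_{T_0}}v-\mu,\ \tfrac{2m}{m-2}\int_Y v\Bigr),
\]
because $R_{h_{T_0}}=0$.

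\emph{Injectivity.} Integrating the first component shows $\mu=0$. Then $v$ is harmonic, hence constant, and the second component forces $v=0$.

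\emph{Surjectivity.} Given $(f,a)$, choose $\mu=-\fint f$ so that $f+\mu$ has mean zero, and solve the Poisson equation for $v$. Then adjust $v$ by a constant to match $a$.

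Thus the linearization is an isomorphism. The implicit function theorem then gives smooth (even analytic) $T\mapsto(u_T,\lambda_T)$ with $u_T$ near $1$, hence positive. So $h_T$ is a smooth family of metrics with constant scalar curvature $R_T=\lambda_T$.

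\emph{Step 4: the sign of $R_T$.} The obstacle I expect is that $h_T$ need not be a Yamabe minimizer. Only minimizers, or at least metrics in conformal classes of definite sign, are guaranteed by \cite{Nayatani1997} to carry the sign of the Yamabe constant. The argument is: if a conformal class contains a metric of constant scalar curvature $R$, then the sign of $R$ equals the sign of the first eigenvalue of the conformal Laplacian, which equals the sign of the Yamabe constant. Indeed, for the metric $h_T$ itself the first eigenvalue of $L=-c\Delta+R_T$ is exactly $R_T$, with constant eigenfunction. The sign of the first eigenvalue is conformally invariant, and it coincides with the sign of the Yamabe constant. Combined with Theorem 3.3 of \cite{Nayatani1997}, this shows that $R_T$ has the sign of \eqref{eq: sign-Nayatani}.
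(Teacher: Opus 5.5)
Your proposal is correct and follows essentially the same route as the paper: an implicit function theorem argument for the volume-normalized Yamabe equation around the scalar-flat metric in $\mathfrak c_{T_0}$, with the same linearization $(v,\rho)\mapsto\bigl(-\tfrac{4(n-2)}{n-3}\Delta v-\rho,\ \tfrac{2(n-1)}{n-3}\int_Y v\bigr)$ and the same invertibility argument. Your Steps 1 and 4 fill in two points the paper only asserts. Step 1 explains why a scalar-flat metric exists in $\mathfrak c_{T_0}$. Step 4 explains, via the first eigenvalue of the conformal Laplacian, why a constant scalar curvature metric that need not be a Yamabe minimizer still has curvature of the same sign as the Yamabe constant.
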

\begin{proof}
Let $h_0\in \mathfrak{c}_{T_0}$ be the scalar flat metric of volume one on $Y$
    For an interval $I$ around $T_0$, for $T\in I$ let $k_T$ be a representative of $\mathfrak c_T$, varying smoothly on $T$, and such that $k_{T_0}=h_0$.
    Any element $h$ of $\mathfrak c_T$ can be written as $h=u^{\frac{4}{n-3}} k_T$ for some positive function $u$. Finding a metric $h_T\in \mathfrak c_T$ with constant scalar curvature $R_T$ and volume $1$ is then equivalent to find a solution $u_T$ of
    \begin{align}\label{eq: Yamabe-equation}
    -\frac{4(n-2)}{n-3}\Delta_{k_T}u_T+\operatorname{Scal}_{k_T}u_T=R_T u_T^\frac{n+1}{n-3},\quad \int_Yu_T^\frac{2(n-1)}{n-3}d\mu_{k_T}=1.
    \end{align}
    To this end, for any $\alpha\in (0,1)$ we define the operator
    \begin{align*}
    \mathcal{Y}:&I\times C^{2,\alpha}_+(Y)\times\mathbb R\to C^{0,\alpha}(Y)\times \mathbb R,\\
    &(T,u,R)\mapsto\left(-\frac{4(n-2)}{n-3}\Delta_{k_T}u+\operatorname{Scal}_{k_T}u-R u^\frac{n+1}{n-3}, \int_Yu^\frac{2(n-1)}{n-3}d\mu_{k_T}-1\right),
    \end{align*}
    where $C^{2,\alpha}_+(Y)=\{u\in C^{2,\alpha}(Y): u>0\}$.
    Since $k_T$ depends smoothly on $T$, $\Delta_{k_T}$, $\operatorname{Scal}_{k_T}$ and $d\mu_{k_T}$ also depend smoothly on $T$, therefore $\mathcal{Y}$ is a smooth map.
    Note that
    \[
    \mathcal{Y}(T_0,1,0)=0
    \]
    and
    \[
    D_{(u,R)}\mathcal{Y}(T_0,1,0)[v,\rho]=\left(-\frac{4(n-2)}{n-3}\Delta_{h_0}v-\rho, \frac{2(n-1)}{n-3}\int_Y v\,d\mu_{ h_0}\right).
    \]
    Note that this operator is invertible as a map $C^{2,\alpha}(Y)\times\mathbb{R}\to C^{0,\alpha}(Y)\times\mathbb{R}$: integrating its first component determines $\rho$, $\Pi v$ is then determined by the invertibility of the Laplacian on mean-zero functions, and the second
component determines $v-\Pi v$.
    Therefore, by the smooth implicit function theorem there exists a smooth map
    \[
    I\to C^{2,\alpha}_+(Y)\times \mathbb R,\quad T\mapsto (u_T, R_T)
    \]
    (up to choosing $I$ smaller if necessary) such that for any $T\in I$ $\mathcal{Y}(T,u_T, R_T)=0$.
    Thus for any such $T$
    \[
    h_T:=u_T^{\frac{4}{n-3}}k_T
    \]
    is a metric on $Y$ with constant scalar curvature $R_T$.
    Since $u_T$ solves \eqref{eq: Yamabe-equation}, $u_T$ is actually smooth and depends smoothly on $T$, so the same holds for $h_T$.
    As $h_T\in \mathfrak{c}_T$, the sign of $R_T$ corresponds to the sign of the Yamabe constant of $(Y,\mathfrak c_T)$, which is given by \eqref{eq: sign-Nayatani}.    
\end{proof}
\begin{lemma}
    \label{lem: N-operator}
    The operator \(\mathcal N\) depends smoothly on its variables and
extends smoothly to \(\varepsilon=0\).\\
    Moreover
    \begin{equation}
\label{eq: explicit-N}
\mathcal N(T,0,w)
=
e^{-2w}
\left(
-\frac{R_T}{2(n-2)}
+\Delta_{h_T}w
+\frac{n-3}{2}|dw|_{h_T}^2
\right).
\end{equation}
\end{lemma}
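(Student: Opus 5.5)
The plan is to compute the mean curvature of the graph $\{x_T=\varepsilon e^{-w(y)}\}$ directly in the compactified metric $\overline{g_T}=dx_T^2+h_{T,x}$ and then convert it to the hyperbolic metric with the conformal change formula (Lemma \ref{lem-sff-conformal}), exactly as was done for level sets in Lemma \ref{lem: sff-levelset}. Set $r=\varepsilon e^{-w}$ and parametrize the graph by $F(y)=(r(y),y)$. In $\overline{g_T}$ the tangent vectors are $\partial_i+\partial_i r\,\partial_x$. The outward unit normal is
\[
\overline\nu=-(1+|dr|^2_{h_{T,r}})^{-1/2}\bigl(\partial_x-h_{T,r}^{-1}dr\bigr),
\]
up to sign conventions. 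The $\overline{g_T}$-mean curvature is the divergence of the extension of this normal, so it equals the mean curvature of the level set at height $r(y)$ plus a graph term of the form $\operatorname{div}_{h_{T,r}}\bigl(dr/\sqrt{1+|dr|^2}\bigr)$. The error terms are smooth in $(r,dr,\nabla^2 r)$ and in the coefficients of $h_{T,x}$. Then $H^{g_T}=-\overline\nu(x_T)\,(n-1)+x_T\overline H$, where $-\overline\nu(x_T)=(1+|dr|^2)^{-1/2}$.

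The key step is to observe that every quantity scales correctly in $\varepsilon$. We have $dr=-\varepsilon e^{-w}dw$, so $|dr|^2=\varepsilon^2e^{-2w}|dw|^2_{h_T}+O(\varepsilon^4)$. Likewise $\operatorname{div}(dr)=-\varepsilon e^{-w}(\Delta w-|dw|^2)+O(\varepsilon^3)$, and the level-set contribution is $O(\varepsilon)$, given by \eqref{eq: MC-levelset} with $\varepsilon$ replaced by $r$. By Lemma \ref{lem: expansion-h}, $h_{T,x}$ is a polynomial in $x^2$ with coefficients smooth in $T$. Hence the whole expression $H(T,\varepsilon,w)$ is a smooth function of $(T,\varepsilon,w)$, as a map $I\times\mathbb R\times C^{2,\alpha}\to C^{0,\alpha}$, that is even in the sense relevant here. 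Concretely, I would write
\[
H=(n-1)(1+|dr|^2)^{-1/2}+r\,\overline H(r,dr,\nabla^2 r),
\]
and check that $n-1-H=\varepsilon^2\,\Phi(T,\varepsilon,w)$ with $\Phi$ smooth up to $\varepsilon=0$. This works because each term is of the form $\varepsilon^2$ times a smooth function of $(\varepsilon, w, dw,\nabla^2w)$: the square root expands as $1-\tfrac12|dr|^2+\varepsilon^4(\dots)$, and $r\overline H$ is $\varepsilon\cdot\varepsilon(\dots)$. This is the smoothness and extension claim.

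For the explicit formula \eqref{eq: explicit-N}, I would collect the $\varepsilon^2$ coefficients. The level-set term contributes $-r^2\operatorname{tr}(h_T^{-1}P_{h_T})/\varepsilon^2=-e^{-2w}R_T/(2(n-2))$ to $\mathcal N$. The square root contributes $\tfrac{n-1}{2}e^{-2w}|dw|^2$. The graph term contributes $r\cdot(-\operatorname{div} dr)/\varepsilon^2=e^{-2w}(\Delta w-|dw|^2)$, with the sign fixed by the outward normal. In addition, the normalization factor and the $\partial_x$-component of $\overline\nu$ paired with $\partial_x h$ contribute a further $-|dw|^2$-type term. The main obstacle is this bookkeeping of the $|dw|^2$ coefficients, to confirm that they sum to $\tfrac{n-3}{2}$. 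My first check would be the case of a constant $w=c$: the graph is then the level set $x=\varepsilon e^{-c}$, and the formula gives $e^{-2c}(-R_T/(2(n-2)))$, consistent with \eqref{eq: MC-levelset}. A second check is the conformal-covariance heuristic that $\mathcal N(T,0,w)$ should be the (normalized) scalar curvature of $e^{2w}h_T$ written in the exponential parametrization, whose gradient term in dimension $n-1$ is exactly $\tfrac{(n-2)(n-3)}{\dots}|dw|^2$ after normalization. I would use this second check to fix the constant.
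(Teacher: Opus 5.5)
Your overall strategy is the same as the paper's: write the graph as $x_T=r(y)$ with $r=\varepsilon e^{-w}$, compute $\overline H$ in $\overline{g_T}=dx_T^2+h_{T,x_T}$, and pass to $H=(n-1)/W+r\overline H$ via Lemma~\ref{lem-sff-conformal}. Your smoothness/extension argument is adequate for the same reason as in the paper: $h_{T,x}$ is even in $x$, so every term of $\overline{\mathbb{II}}$ carries a factor $r$.

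The gap is in the derivation of \eqref{eq: explicit-N}. You list three contributions to $\mathcal N(T,0,w)$: the level-set term $-e^{-2w}R_T/(2(n-2))$, the term $\tfrac{n-1}{2}e^{-2w}|dw|^2_{h_T}$ from $(n-1)/W$, and the graph term $e^{-2w}(\Delta_{h_T}w-|dw|^2_{h_T})$. You then claim that the normalization factor and the pairing of the $\partial_x$-component of $\overline\nu$ with $\partial_x h_{T,x}$ give a further $|dw|^2$-type contribution. They do not. Each of these corrections is $O(r^3)$ in $\overline{\mathbb{II}}$:
\begin{itemize}
\item $W^{-1}=1+O(r^2)$ multiplies a tensor that is already $O(r)$.
\item The cross terms $r_i r^\ell\,\partial_x(h_{T,x})_{j\ell}|_{x=r}$ are products of three $O(r)$ factors.
\item $\Gamma(h_{T,r})-\Gamma(h_T)=O(r^2)$ is paired with $r_k=O(r)$.
\item $\overline\gamma^{-1}=h_T^{-1}+O(r^2)$ is applied to an $O(r)$ tensor when you take the trace.
\end{itemize}
In your divergence formulation, the $\partial_x$-derivative of the normal's $\partial_x$-component gives exactly the level-set term plus $\partial_x(W^{-1})=O(r^3)$. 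Also $\operatorname{div}(\nabla r/W)=\Delta_{h_T} r+O(r^3)$.

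So these terms contribute $O(\varepsilon^4)$ to $H$ and nothing to $\mathcal N(T,0,w)$. The three terms you already have give the coefficient $\tfrac{n-1}{2}-1=\tfrac{n-3}{2}$ on their own, so any additional $|dw|^2$ term would make the formula wrong. The missing step is precisely this order count showing that all remaining terms are $r^3E$. The paper's proof carries it out line by line.

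Deferring to the conformal-covariance heuristic to ``fix the constant'' does not close the gap. The heuristic itself is correct: by the conformal transformation law in dimension $n-1$,
$-R_{e^{2w}h_T}/(2(n-2))=e^{-2w}\bigl(-\tfrac{R_T}{2(n-2)}+\Delta_{h_T}w+\tfrac{n-3}{2}|dw|^2_{h_T}\bigr)$.
It could in principle be made into an independent proof. You would use the eikonal equation to show that your graph is a level set of the geodesic defining function of $e^{2w}h_T$ up to a relative $O(\varepsilon^2)$ perturbation. You would then combine Lemma~\ref{lem: sff-levelset} with the smooth dependence of $\mathcal N$ on $w$.

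You invoke it only as a consistency check, however. Your other check, with constant $w$, cannot detect the $|dw|^2$ coefficient at all. As written, the explicit formula is not derived; it is obtained by overriding an incorrect term count with a heuristic.
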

\begin{proof}
Set
$r=\varepsilon e^{-w}$. In what follows, \(E\) denotes a tensor depending smoothly on
\(
    (T,r^2,y,w,Dw,D^2w)
\)
and which may change from line to line.
 The metric of the ideal compactification is given by $\overline g_T=dx_T^2+h_{T,x_T}$, where $h_{T,x_T}$ has the explicit form described in Lemma \ref{lem: expansion-h}.
The metric induced by \(\overline g_T\) on the graph \(x_T=r(y)\) is
\[
    \overline\gamma=h_{T,r}+dr\otimes dr.
\]
Since \(dr=-r\,dw\), it follows that
\[
    \overline\gamma
    =
    h_T+r^2\bigl(dw\otimes dw-P_{h_T}\bigr)+O(r^4),
\]
and hence
\[
    \overline\gamma^{-1}=h_T^{-1}+r^2E_1,
\]
where \(E_1\) depends smoothly on
\((T,\varepsilon,y,w,Dw)\).

The outward unit normal vector (with respect to \(\overline g_T\)) is
\[
    \overline\nu
    =
    \frac{-\partial_{x_T}+\nabla^{h_{T,r}}r}{W},
    \qquad
    W=\sqrt{1+|dr|_{h_{T,r}}^2}.
\]

A direct computation in the coordinates \((x_T,y)\) gives
\begin{align}
\overline{\mathbb{II}}_{ij}
=\frac1W\bigg(
&r_{ij}
-\Gamma^k_{ij}(h_{T,x_T})\big|_{x_T=r}r_k
-\frac12\partial_{x_T}(h_{T,x_T})_{ij}\big|_{x_T=r}
\notag\\
&-\frac12r_jr^\ell
  \partial_{x_T}(h_{T,x_T})_{i\ell}\big|_{x_T=r}
-\frac12r_ir^\ell
  \partial_{x_T}(h_{T,x_T})_{j\ell}\big|_{x_T=r}
\bigg),
\label{eq: graph-II}
\end{align}
where \(r^\ell=(h_{T,r})^{\ell k}r_k\).
Since $h_{T,x_T}$ is smooth in \(x_T^2\), we have
\[
    \Gamma^k_{ij}(h_{T,x_T})
    =
    \Gamma^k_{ij}(h_T)+x_T^2E^k_{ij},
    \qquad
    -\frac12\partial_{x_T}h_{T,x_T}
    =
    x_TP_{h_T}+x_T^3E.
\]
Moreover, since \(r_i=-rw_i\),
\begin{align*}
r_{ij}
-\Gamma^k_{ij}(h_{T,x_T})\big|_{x_T=r}r_k
=
(\nabla_{h_T}^2r)_{ij}+r^3E_{ij}=
r\left(
    w_iw_j-(\nabla_{h_T}^2w)_{ij}
\right)+r^3E_{ij}.
\end{align*}
Since $r_i$, $r^\ell$, and $\partial_{x_T}h_{T,t}\big|_{x_T=r}$ all are of order $O(r)$, the last two terms in \eqref{eq: graph-II} can be rewritten as $r^3E_{ij}$.
Finally,
\[
    W^{-1}
    =
    \left(1+r^2|dw|_{h_{T,r}}^2\right)^{-1/2}
    =
    1+r^2E.
\]
Substituting these identities into \eqref{eq: graph-II} gives
\[
    \overline{\mathbb{II}}
    =
    r\left(
        P_{h_T}-\nabla_{h_T}^2w+dw\otimes dw
    \right)
    +r^3E,
\]
so that the corresponding mean curvature is given by
\[
\overline{H}=r\left(\operatorname{tr}_{h_T}P_{h_T}-\Delta_{h_T}w+\lvert dw\rvert_{h_T}^2\right)+r^3 E=r\left(\frac{R_T}{2(n-2)}-\Delta_{h_T}w+\lvert dw\rvert_{h_T}^2\right)+r^3 E.
\]

Since $\overline \nu (x_T)=\frac{-1}{W}$, Lemma \ref{lem-sff-conformal} implies that
\[
    H(T,\varepsilon,w)
    =
    r\overline H+\frac{n-1}{W}.
\]
Moreover, $W^2=    1+r^2|dw|_{h_{T,r}}^2$, therefore
\[
    (n-1)\left(1-\frac1W\right)
    =
    \frac{n-1}{2}r^2|dw|_{h_T}^2+r^4E.
\]
Consequently,
\begin{align*}
    n-1-H(T,\varepsilon,w)
    =
    (n-1)\left(1-\frac1W\right)-r\overline H=
    r^2\left(
        -\frac{R_T}{2(n-2)}
        +\Delta_{h_T}w
        +\frac{n-3}{2}|dw|_{h_T}^2
        +r^2E
    \right).
\end{align*}
Thus
\[
\mathcal N(T,\varepsilon,w)
=
e^{-2w}\left(
    -\frac{R_T}{2(n-2)}
    +\Delta_{h_T}w
    +\frac{n-3}{2}|dw|_{h_T}^2
    +\varepsilon^2e^{-2w}E
\right).
\]
Since \(E\) is smooth in all its arguments, the right-hand side can be
written as
\[
    \Phi\bigl(
        T,\varepsilon^2,y,
        w(y),Dw(y),D^2w(y)
    \bigr)
\]
for a smooth function \(\Phi\). The associated Nemytskii operator is
smooth on Hölder spaces. Consequently, for \(\varepsilon_0>0\)
sufficiently small, and for a sufficiently small open neighborhood $\mathcal U$
of $0$ in $C^{2,\alpha}_0(Y)$,
\[
\mathcal N:
I\times(0,\varepsilon_0)\times\mathcal U
\longrightarrow C^{0,\alpha}(Y)
\]
admits a smooth extension to \(\varepsilon=0\).
Setting \(\varepsilon=0\) gives
\[
\mathcal N(T,0,w)
=
e^{-2w}
\left(
    -\frac{R_T}{2(n-2)}
    +\Delta_{h_T}w
    +\frac{n-3}{2}|dw|_{h_T}^2
\right).
\]
\end{proof}

\begin{lemma}
\label{lem-sff-conformal}
Let $\Sigma$ be a two-sided hypersurface in a Riemannian manifold $(M, g)$. Let $\widehat g=e^{2f}g$ for some function $f\in C^1(M)$. Let \(\nu\) and \(\widehat\nu\) be the
unit normals with respect to \(g\) and \(\widehat g\), respectively,
chosen with the same orientation. Then $\hat \nu=e^{-f}\nu$ and the respective shape operators are related as follows:
\[
A^{\widehat g}=e^{-f}(
\nu(f)I+A^g).
\]
\end{lemma}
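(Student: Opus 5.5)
The plan is a direct local computation, working pointwise on $\Sigma$ in an orthonormal frame for $g$. I will first compare the two unit normals and then the two Levi-Civita connections.

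\emph{Step 1: the normals.} Since $\widehat g(\nu,\nu)=e^{2f}g(\nu,\nu)=e^{2f}$, the vector $e^{-f}\nu$ is $\widehat g$-unit. It is $\widehat g$-orthogonal to $T\Sigma$, because $\widehat g$-orthogonality coincides with $g$-orthogonality. It also has the same orientation as $\nu$. Hence $\widehat\nu=e^{-f}\nu$.

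\emph{Step 2: the connections.} I will use the standard conformal change formula, which follows from Koszul's formula:
\[
\widehat\nabla_XY=\nabla_XY+X(f)Y+Y(f)X-g(X,Y)\nabla^g f .
\]

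\emph{Step 3: the shape operator.} I take the convention implicit in Lemma \ref{lem: sff-levelset}, namely $A(X)=\pm(\nabla_X\nu)^{T}$ with the sign fixed so that horospheres have $A=I$ for the outward normal. With that convention, for $X$ tangent to $\Sigma$ I compute
\[
\widehat\nabla_X\widehat\nu
= X(e^{-f})\nu+e^{-f}\bigl(\nabla_X\nu+X(f)\nu+\nu(f)X-g(X,\nu)\nabla f\bigr).
\]
Taking tangential components, the terms proportional to $\nu$ drop out, and $g(X,\nu)=0$. This leaves
\[
(\widehat\nabla_X\widehat\nu)^T=e^{-f}\bigl((\nabla_X\nu)^T+\nu(f)X\bigr).
\]
Since the tangential projections with respect to $g$ and $\widehat g$ agree, this yields
\[
A^{\widehat g}=e^{-f}\bigl(A^g+\nu(f)I\bigr),
\]
up to the sign convention, which is the claim.

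\emph{Main obstacle.} The only delicate point is consistency of the sign convention for $A$ with how the lemma is used. I will check this against the model case: $g=\overline g_\HH$, $f=-\log x$, $\nu=-\partial_x$. There $\nu(f)=1/x$, so a totally geodesic slice in the upper half-space model ($h$ constant) gives $A^{g_\HH}=x\cdot\tfrac1x I=I$, as it should for a horosphere. This confirms that $A(X)=(\nabla_X\nu)^T$ with the outward normal is the correct convention.
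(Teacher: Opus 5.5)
Your proposal is correct and follows the paper's proof essentially verbatim. Like the paper, you identify $\widehat\nu=e^{-f}\nu$, apply the conformal change formula for the Levi-Civita connection, and differentiate $e^{-f}\nu$ along $X\in T\Sigma$. The only cosmetic differences are two. First, the paper observes that the $\nu$-components cancel exactly, and $\nabla^g_X\nu$ is already tangent, so no tangential projection is needed. Second, the paper simply takes $A(X)=\nabla_X\nu$ as the definition, which is the convention your sanity check confirms, so your hedge ``up to the sign convention'' can be dropped.
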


\begin{proof}
The identity $\hat \nu=e^{-f}\nu$ is immediate.
By Koszul formula, the Levi-Civita connections of \(g\) and \(\widehat g\) satisfy, for any $C^1$ vector fields $X$, $Y$,
\[
    \nabla_X^{\widehat g}Y
    =
    \nabla_X^gY
    +X(f)Y+Y(f)X-g(X,Y)\nabla^gf.
\]
Thus, for \(X\in T\Sigma\),
\begin{align*}
\nabla_X^{\widehat g}\widehat\nu=
\nabla_X^{\widehat g}(e^{-f}\nu)=
e^{-f}\left(\nabla_X^g\nu+\nu(f)X\right),
\end{align*}
since \(g(X,\nu)=0\). As $A^g(X)=\nabla^g_X\nu$ and $A^{\widehat g}(X)=\nabla^{\widehat g}_X\widehat \nu$, the identity holds.
\end{proof}
\printbibliography
\end{document}